\theoremstyle{plain}
\newtheorem{theorem}{Theorem}[section]
\newtheorem{lemma}[theorem]{Lemma}
\newtheorem{prop}[theorem]{Proposition}
\newtheorem{utheorem}{\textrm{\textbf{Theorem}}}
\newcommand{\bd}[1]{d^{[#1]}}
\theoremstyle{definition}
\newtheorem{defn}[theorem]{Definition}
\newtheorem{rem}[theorem]{Remark}
\newtheorem{example}[theorem]{Example}
\numberwithin{equation}{section}
\DeclareMathOperator{\adj}{adj}
\newcommand{\altr}[1]{\mathbb{R}^{#1}_{\rm alt}}
\newcommand{\lcp}[1]{\mathrm{LCP}(#1,q)}
\newcommand{\lcpn}[1]{\mathrm{LCP}(#1)}
\newcommand{\sol}[1]{\mathrm{SOL}(#1,q)}
\newcommand{\rn}[1]{\mathbb{R}^{#1}}
\newcommand{\pn}[1]{\mathbb{R}^{#1}_{\rm +,-}}
\begin{document}
\title{Total negativity: characterizations and single-vector tests}
\author{Projesh Nath Choudhury}
\address[P.N.~Choudhury]{Discipline of Mathematics, Indian Institute of Technology Gandhinagar,
	Palaj, Gandhinagar 382355, India}
\email{\tt projeshnc@iitgn.ac.in}

\date{\today}

\begin{abstract}
    A matrix is called totally negative (totally non-positive) of order $k$, if all its minors of size at most $k$ are negative (non-positive). The objective of this article is to provide several novel characterizations of total negativity via the (a) variation diminishing property, (b) sign non-reversal property, and (c) Linear Complementarity Problem. (The last of these provides a novel connection between total negativity and optimization/game theory.) More strongly, each of these three characterizations uses a single test vector whose coordinates alternate in sign. As an application of the sign non-reversal property, we study the interval hull of two rectangular matrices. In particular, we identify two matrices $C^\pm(A,B)$ in the interval hull of matrices $A$ and $B$ that test total negativity of order $k$, simultaneously for the entire interval hull. We also show analogous characterizations for totally non-positive matrices and provide a finite set of test matrices to detect the total non-positivity property of an interval hull.
   These novel characterizations may be considered similar in spirit to
   fundamental results characterizing totally positive matrices by
   Brown--Johnstone--MacGibbon [\textit{J.\ Amer.\ Statist.\ Assoc.}, 1981] (see also Gantmacher--Krein, 1950), Choudhury--Kannan--Khare [\textit{Bull.\ London Math.\ Soc.}, 2021] and Choudhury [\textit{Bull.\ London Math.\ Soc.}, 2022]. Finally, we show that totally negative/non-positive matrices can not be detected by (single) test vectors from orthants other than the open bi-orthant that have coordinates with alternating signs, via  the variation diminishing property or the sign non-reversal property.

\end{abstract}

\subjclass[2010]{15B48 (primary), 15A24, 65G30, 90C33 (secondary)}

\keywords{Totally negative matrix, totally non-positive matrix, variation diminishing property, sign non-reversal property, Linear Complementarity Problem, interval hull of matrices}

\maketitle

\vspace*{-11mm}
\settocdepth{section}
\tableofcontents

\section{Introduction and main results}

Given integers $m,n\geq k\geq 1$, a real matrix $A\in \mathbb{R}^{m \times n}$ is called \textit{totally negative of order $k$} if all minors of $A$ of order at most $k$ are negative, and $A$ is called \textit{totally negative} if $k=\min \{m,n\}$. Similarly, one defines \textit{totally non-positive} matrices (including of order $k$).

In prior discussions of totally negative (or totally non-positive)
matrices, their analogy to totally positive/$TP$ (or totally
non-negative/$TN$)  matrices, which are matrices with all positive (or
non-negative) minors, perpetually comes up first (and which we refer to
henceforth by the standard abbreviations $TP$ and $TN$, or $TP_k$ and
$TN_k$ to distinguish them from the matrices studied in the present
paper). Indeed, $TP$ and $TN$ are widely studied because of their
numerous applications in a variety of topics in mathematics, including,
analysis, combinatorics, cluster algebras, differential equations, Gabor
analysis, matrix theory, probability, and representation theory
\cite{BGKP21,BGKP20,Bre95,fallat-john,FP12,FZ02,GK50,GRS18,K68,pinkus,Ri03,S30,S55,Whitney}. In 1937, Gantmacher--Krein \cite{gantmacher-krein} characterized $TP$ matrices using positivity of the spectra of all square submatrices. A similar characterization for totally negative matrices was given by Fallat--van den Driessche in 2000 \cite{FVD00}. They proved that if $A$ is a totally negative matrix then all the eigenvalues of each square submatrix of $A$ are real, distinct, nonzero and exactly one eigenvalue is negative. (The converse is immediate.) Similar spectral characterizations hold for $TN$ and totally non-positive matrices, using the respective density of $TP$ and totally negative matrices in these classes and the continuity of roots. Totally negative/non-positive matrices are also studied in the context of matrix factorization and spectral perturbation, see \cite{CKRU08,CKRU9,HC15}.

Another interesting property exhibited by $TP$ and $TN$ matrices is the variation diminishing property. The phrase `variation diminishing' (`variationsvermindernd' in German) was formulated by P\'{o}lya in connection with proving the following  result of Laguerre \cite{Laguerre}, on a refinement of Descartes' rule of signs \cite{Descartes}: if $f(x)$ is a (real) polynomial and $\gamma \geq 0$, then the variations
$var(e^{\gamma x} f(x))$ in the Maclaurin coefficients of $e^{\gamma x}
f(x)$ are non-increasing in $0 \leq \gamma < \infty$, and are thereby bounded
above by $var(f(x)) < \infty$. In 1912, Fekete in correspondence with P\'{o}lya reformulated Laguerre's result in the language of $TP$ and proved this using one-sided P\'{o}lya frequency sequences and their variation diminishing property. In 1930, Schoenberg \cite{S30} proved that the variation diminishing property  holds for $TN$ matrices. The characterization of $TP$ and $TN$ in terms of variation diminution was achieved in 1950 by Gantmacher--Krein \cite{GK50} which was later refined by Brown--Johnstone--MacGibbon in 1981 \cite{BJM81}.
The variation diminishing property is also satisfied by other matrices, e.g. $-A$ for $A$ $TP/TN$. This was explored by Motzkin \cite{Mot36} in his PhD thesis in 1936, and he showed that the class of matrices satisfying the variation diminishing property are precisely the sign-regular matrices -- in other words, there exists a sequence of signs $\varepsilon_k \in \{ \pm 1 \}$ such that every $k \times k$ minor of $A$ has sign $\varepsilon_k$, for all $k \geq 1$. A systematic treatment of sign-regular matrices and kernels can be found in Karlin's comprehensive monograph \cite{K68}. 

The goal of this note is to provide similar characterization results for the class of totally negative/totally non-positive matrices, i.e. ones for which all minors are negative/non-positive. More precisely, we provide novel characterizations of such matrices using (a) sign non-reversal and (b) Linear Complementarity, to go along with Motzkin's classical characterization using (c) variation diminution. In addition, we strengthen all three tests (a), (b), and (c) to work with only a single test vector, in parallel to our recent works \cite{C21,CKK21} for $TP$ matrices.

 To state these results, we need some preliminary definitions, which are used below without further reference.

\begin{defn}
Let $m,n \geq 1$ be integers.
\begin{enumerate}
	\item Given a vector $x \in \mathbb{R}^n$, let $S^-(x)$ denote the number of sign changes in $x$ after removing all zero entries. Next, assign a value of $\pm 1$ to each zero entry of $x$, and let $S^+(x)$ denote the maximum possible number of sign changes in the resulting sequence. For $0\in \mathbb{R}^n$, we define $S^+(0):=n$ and $S^-(0):=0$.
\item A square matrix $A \in \mathbb{R}^{n \times n}$ has the \textit{sign non-reversal property} on a set of vectors $T\subseteq \mathbb{R}^n$, if for all vectors $ 0\neq x \in T$, there exists a coordinate $i \in [1,n]$ such that $x_i (Ax)_i> 0$.
\item A matrix $A \in
\mathbb{R}^{n \times n}$ has the \textit{non-strict sign non-reversal
	property} on a set of vectors $T\subseteq \mathbb{R}^n$, if for all vectors $ 0\neq x \in T$, there
exists a coordinate $i \in [1,n]$ such that $x_i \neq 0$ and
$x_i (Ax)_i \geq 0$.
\item A \textit{contiguous submatrix} is a submatrix whose rows and columns are indexed by sets of consecutive integers.
\item Let $\altr{n} \subset \mathbb{R}^n$ denote the set of real vectors whose coordinates are nonzero and have alternating signs. Also define $\pn{n}$ to comprise the vectors in $\mathbb{R}^n$ with at least one positive and one negative coordinate.
\item Given a matrix $A\in \mathbb{R}^{n \times n}$ and $i,j \in [1,n]$, let $A^{ij}$ denote the determinant of the submatrix of $A$ of size $n-1$ formed by deleting the $ith$ row and $jth$ column of $A$. Let $\adj(A)$ denote the adjugate matrix of $A$.
\item If $n=1$, then define $A^{11}:=-1$ to be the determinant of the empty matrix. (This, convention will only be used in Theorem \ref{tn-lcp_2}, Proposition \ref{tnplcp2}, and their proofs.)
\item  Let $e^i\in \mathbb{R}^n$ denote the vector whose $i$-th component is $1$, and other components are zero.
\item We say that an array $X$ is $\geq 0$ (respectively $X>0,~ X\leq 0,~ X<0$) if all coordinates of $X$ are $\geq 0$ (respectively $>0,~\leq 0,~<0$).
\item Define the vector $\bd{n} := (1, -1, \dots, (-1)^{n-1})^T \in \altr{n}$.
\item Given $z = (z_1, \dots, z_n)^T \in \{ \pm 1 \}^n$, define the diagonal matrix $D_z$ whose $i$th diagonal entry is $z_i$.
\item Given two matrices $A,B \in \mathbb{R}^{m \times n}$,  and
tuples of signs $z \in \{ \pm 1 \}^m, \tilde{z} \in \{ \pm 1 \}^n$, define the
$m \times n$ matrices $|A|$, $I_{z,\tilde{z}}(A,B)$, and $C^\pm(A,B)$ via:
\[
|A|_{ij} := |a_{ij}|, \qquad I_{z,\tilde{z}}(A,B) :=
\frac{A+B}{2} - D_z \frac{|A-B|}{2} D_{\tilde{z}}, \qquad C^\pm(A,B) :=
I_{\bd{m},\; \pm \bd{n}}(A,B).
\]

\end{enumerate}
\end{defn}
Now we state our main results -- only for totally negative matrices. The penultimate section contains the analogous theorems for totally non-positive matrices. We first look at  one of the most well known and widely used properties of $TP$ matrices: their variation diminution on the test set of all nonzero real vectors \cite{BJM81}. Recently in \cite{C21}, we characterized total positivity via the variation diminishing property by providing a finite set of test vectors -- in fact a single vector for each contiguous submatrix.
Our first main result similarly characterizes total negativity using the variation diminishing property. 

\begin{utheorem}\label{TN-vandimnew}
	Let $m,n\geq 2$ be integers. Given a real $m \times n$ matrix $A$ with $A<0$, the following statements are
	equivalent:
	\begin{enumerate}
		\item $A$ is totally negative.
		\item For all $ x \in \pn{n}$, $S^+(Ax) \leq S^-(x)$. Moreover, if equality holds and $Ax \neq 0$, then the first (last) component of $Ax$ (if zero, the unique sign given in determining $S^+(Ax)$) has the same sign as the first (last) nonzero component of $x$. 
		\item Let $k=\min\{m,n\}$ and $r \in[2,k]$. For every $r
		\times r$ contiguous submatrix $A_r$ of $A$ and for any choice of nonzero vector $\alpha:=(\alpha_1,-\alpha_2,\ldots ,(-1)^{r-1}\alpha_r)^T\in \mathbb{R}^r$ with all $\alpha_i\geq 0$ (or $\leq 0$), define the vector
		\begin{equation}\label{tnvdeq1}
		x^{A_r} := \adj(A_r) \alpha.
		\end{equation}
		Then $S^+(A_rx^{A_r}) \leq S^-(x^{A_r})$. If equality occurs here, then the first (last)
		component of $A_rx^{A_r}$ (if zero, the unique sign given in determining $S^+(A_rx^{A_r})$) has the same sign as the first (last)
		nonzero component of $x^{A_r}$. 
	\end{enumerate}
\end{utheorem}

We now turn our attention to a recently developed characterization of $TP$ matrices. In joint work \cite{CKK21}, we showed that $TP_k$ matrices are characterized by the sign non-reversal property on the alternating open bi-orthant, and recently in \cite{C21} we further improved this result by providing a single test vector for each contiguous submatrix.  Our next result provides the corresponding characterization of total negativity in terms of sign non-reversal phenomena. 
\begin{utheorem}\label{tn-sign-rev_k}
Let $m,n \geq k \geq 2$ be integers, and $A \in \mathbb{R}^{m \times n}$ such that $A<0$. The following statements
are equivalent:
\begin{enumerate}
	\item The matrix $A$ is totally negative of order $k$.
	\item Every square submatrix of $A$ of size $r \in [2,k]$ has the
	sign non-reversal property on $\pn{r}$.
	\item Every contiguous square submatrix of $A$ of size $r \in [2,k]$ has the sign non-reversal property on
	$\altr{r}$.\smallskip
	
	In fact, these conditions are further equivalent to sign non-reversal at a
	single vector:\smallskip
	
	\item For every contiguous square  submatrix $A_r$ of $A$ of size $r \in [2,k]$ and for any fixed nonzero vector $\alpha:=(\alpha_1,-\alpha_2,\ldots ,(-1)^{r-1}\alpha_r)^T\in \mathbb{R}^r$ with all $\alpha_i\geq 0$ (or $\leq 0$), define the vector
	\begin{equation}\label{tnsnreq1}
		x^{A_r} := \adj(A_r) \alpha.
	\end{equation}
	Then $A_r$ has the  sign non-reversal property
	for $x^{A_r}$.
\end{enumerate}
\end{utheorem}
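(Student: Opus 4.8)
The plan is to prove the cycle $(1)\Rightarrow(2)\Rightarrow(3)\Rightarrow(1)$ together with $(1)\Leftrightarrow(4)$; the step $(2)\Rightarrow(3)$ is immediate, since a contiguous square submatrix is a square submatrix and $\altr{r}\subseteq\pn{r}$. The computational heart of the single-vector statements is the following observation. Let $A_r$ be an $r\times r$ submatrix all of whose minors of order $r-1$ are negative, and let $\alpha$ be the vector of the statement, whose $i$th coordinate is $(-1)^{i-1}\alpha_i$; replacing $\alpha$ by $-\alpha$ if necessary, we may assume every $\alpha_i\ge 0$, so that $\sum_i\alpha_i>0$. Since $(\adj A_r)_{ij}=(-1)^{i+j}A_r^{ji}$, a direct computation gives
\[
x^{A_r}_i=(\adj(A_r)\alpha)_i=(-1)^{i-1}s_i,\qquad (A_rx^{A_r})_i=\det(A_r)\,(-1)^{i-1}\alpha_i,
\]
where $s_i:=\sum_{j}A_r^{ji}\alpha_j$ is negative, being a sum of negative order-$(r-1)$ minors with nonnegative weights that are not all zero. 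Consequently $x^{A_r}\in\altr{r}$, and $x^{A_r}_i(A_rx^{A_r})_i=s_i\,\alpha_i\,\det(A_r)$ for every $i$; since $s_i<0$ and some $\alpha_i>0$, this product is positive at some coordinate if and only if $\det A_r<0$. Thus, provided every order-$(r-1)$ minor of $A$ is negative, the statement ``$A_r$ has the sign non-reversal property for $x^{A_r}$'' is equivalent to ``$\det A_r<0$'', and under the same hypothesis sign non-reversal of $A_r$ on $\altr{r}$ also forces $\det A_r<0$ (test at $x^{A_r}\in\altr{r}$). In particular $(1)\Rightarrow(4)$ is immediate: if $A$ is totally negative of order $k$, then for each contiguous $r\times r$ submatrix with $r\le k$ all minors are negative, so $s_i<0$, $\det A_r<0$, and $x^{A_r}_i(A_rx^{A_r})_i>0$ at any $i$ with $\alpha_i>0$.

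For $(1)\Rightarrow(2)$ one must handle arbitrary test vectors, and here I would argue via variation diminution. Every $r\times r$ submatrix $A_r$ with $r\le k$ is strictly sign-regular of order $r$ with signature $(-1,\dots,-1)$. Suppose some $0\ne x\in\pn{r}$ satisfied $x_i(A_rx)_i\le 0$ for all $i$. Then $1\le S^-(x)\le r-1$; moreover, wherever $x_i\ne 0$ the number $-(A_rx)_i$ has the same sign as $x_i$ or vanishes, so every sign change of $x$ can be realised in $-A_rx$, giving $S^+(A_rx)=S^+(-A_rx)\ge S^-(x)$, while the variation-diminishing property of sign-regular matrices yields $S^+(A_rx)\le S^-(x)$. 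Hence $S^+(A_rx)=S^-(x)$, and the sign information attached to this equality case --- combined with the inequalities $x_i(A_rx)_i\le 0$ and the negativity of the minors of $A_r$ --- must be pushed to a contradiction. For $r=2$ this is transparent: after a sign normalization the two inequalities force $|a_{11}|\,|a_{22}|\ge|a_{12}|\,|a_{21}|$, that is, $\det A_r=|a_{11}|\,|a_{22}|-|a_{12}|\,|a_{21}|\ge 0$, contrary to $(1)$.

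For $(3)\Rightarrow(1)$ and $(4)\Rightarrow(1)$ I would induct on $r=1,\dots,k$, the assertion at stage $r$ being that every order-$r$ minor of $A$ is negative; stage $1$ is the standing hypothesis $A<0$. Assuming every minor of $A$ of order less than $r$ is negative, the equivalence established above applies to each contiguous $r\times r$ submatrix $A_r$: under $(4)$ we are handed sign non-reversal at $x^{A_r}$, and under $(3)$ we are handed it on $\altr{r}\ni x^{A_r}$, so either way $\det A_r<0$. Thus every \emph{contiguous} order-$r$ minor of $A$ is negative, and it remains to upgrade this --- using that all minors of orders $r-1$ and $r-2$ are already negative --- to all order-$r$ minors. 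That upgrade is a reduction to solid minors via Dodgson/Sylvester condensation identities; for $r=2$ it is just the elementary fact that submodularity of $(i,j)\mapsto\log|a_{ij}|$ across unit boxes telescopes to submodularity across all boxes, i.e.\ all contiguous $2\times2$ minors being negative implies that all $2\times2$ minors are negative. This closes the induction.

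I expect two genuine obstacles. The first is turning the equality case of the variation-diminishing inequality into a contradiction in $(1)\Rightarrow(2)$ for general $r$: the signature $(-1,\dots,-1)$ makes this essentially different from the totally positive case, where a totally positive submatrix of order $r$ is a $P$-matrix and hence has the sign non-reversal property on all of $\mathbb{R}^r\setminus\{0\}$ automatically. The second is the solid-minor reduction for totally negative matrices, where the alternating signs must be carried carefully through the condensation identities, rather than by the monotone bookkeeping that suffices in the totally positive setting. The overall scheme should run in parallel to the treatment of totally positive matrices in \cite{CKK21}.
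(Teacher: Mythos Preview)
Your overall architecture and the key adjugate computation are correct and match the paper's. The difference is that the two ``genuine obstacles'' you flag are not obstacles at all in the paper: both are dispatched by citing known theorems, so the paper's proof is much shorter than what you outline.

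For $(1)\Rightarrow(2)$ the paper does not go through variation diminution. It simply invokes the $N$-matrix characterization of Parthasarathy--Ravindran: if $A_r<0$ has all principal minors negative, then $A_r$ has the sign non-reversal property on all of $\pn{r}$ (this is the paper's Theorem~2.2, from \cite{PR90}). Since every square submatrix of a totally negative matrix of order $k$ has all principal minors negative, $(2)$ follows immediately. Your variation-diminishing argument, as you suspect, is awkward to close in the equality case and is unnecessary.

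For $(4)\Rightarrow(1)$ (and $(3)\Rightarrow(1)$), your induction and your computation showing ``sign non-reversal at $x^{A_r}$ $\Leftrightarrow$ $\det A_r<0$'' are exactly what the paper does. But you then try to upgrade ``every \emph{contiguous} $r\times r$ minor is negative'' to ``every $r\times r$ minor is negative'' by hand via condensation identities. The paper instead cites Karlin's theorem (the paper's Theorem~2.1, from \cite{K68}): a matrix is totally negative of order $k$ if and only if every \emph{contiguous} minor of size $\le k$ is negative. This is invoked twice---once at the outset to reduce the goal to contiguous minors, and once inside the induction step to pass from ``all proper contiguous minors of $A_r$ are negative'' to ``all proper minors of $A_r$ are negative'' (which is what makes every $A_r^{ji}<0$ and hence $s_i<0$). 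With Karlin's theorem in hand, no condensation bookkeeping is needed.

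In short: your $(2)\Rightarrow(3)\Rightarrow(4)$ and your single-vector computation are the same as the paper's; your $(1)\Rightarrow(2)$ and your contiguous-to-general step should be replaced by citations of \cite{PR90} and \cite{K68}, respectively, which removes both obstacles you identified.
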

\begin{rem}
	A careful look at the characterization of $TP$ in \cite{C21,CKK21} reveals the importance of the signs of the entries of $A$. Indeed, if $A>0$ in Theorem \ref{tn-sign-rev_k}, the assertions $(3),(4)$ are instead equivalent to $TP_k$.
\end{rem}
As an application of Theorem \ref{tn-sign-rev_k}, we simultaneously detect the total negativity of an entire interval hull of matrices by reducing it to a set of two test matrices. 
 Recall that the \textit{interval hull} of two matrices $A, B \in \mathbb{R}^{m \times n}$, denoted by $\mathbb{I}(A,B)$, is defined as follows:
\begin{equation}
\mathbb{I}(A,B) = \{C \in \mathbb{R}^{m \times n}: c_{ij} = t_{ij} a_{ij}
+ (1 - t_{ij}) b_{ij}, t_{ij} \in [0,1]\} \label{hulleqn}.
\end{equation}

If $A\neq B$, then $\mathbb{I}(A,B)$ is an uncountable set. We say that an interval hull is totally negative (totally non-positive) of order $k$ if all the matrices in it are totally negative (totally non-positive) of order $k$. For more details about interval hulls of matrices, we refer to \cite{GAT16, GAA21}. One of the interesting questions, related to interval hulls of matrices, is to find a minimal test set which
would determine if an entire interval hull $\mathbb{I}(A,B)$ is totally negative. In \cite{Gar82}, Garloff answered this question when
(i)~the interval hull consists of square matrices ($m=n$), and
(ii)~the order of total negativity equals the size of the matrices ($k=n$). In the next result, we drop these two constraints and answer this question completely.

\begin{utheorem}\label{tn_hull_k}
	Let $m,n \geq k \geq 1$ be integers, and $A, B \in \mathbb{R}^{m
		\times n}$. Then $\mathbb{I}(A, B)$ is totally negative of order $k$
	if and only if the matrices $C^\pm(A,B)$ are totally negative of order $k$.
\end{utheorem}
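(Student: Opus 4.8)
The plan is to reduce the statement, via Theorem \ref{tn-sign-rev_k}, to a purely entrywise comparison among the matrices in $\mathbb{I}(A,B)$ and the two extreme matrices $C^\pm(A,B)$. The key point is that total negativity of order $k$ is tested, submatrix by submatrix, by the sign non-reversal property on the alternating orthants $\altr{r}$ (condition (3) of Theorem \ref{tn-sign-rev_k}), and that $I_{z,\tilde z}(A,B)$ is precisely the "worst case" of $\mathbb{I}(A,B)$ when one feeds it a test vector whose sign pattern is $\tilde z$ and measures sign-reversal with respect to the sign pattern $z$ on the output. Concretely, for a fixed row-sign tuple $z \in \{\pm 1\}^m$ and column-sign tuple $\tilde z \in \{\pm 1\}^n$, and for any $C \in \mathbb{I}(A,B)$ and any $x \in \mathbb{R}^n$ with $D_{\tilde z} x \geq 0$, one has the coordinatewise inequality
\[
z_i \,(Cx)_i \;\geq\; z_i\,\bigl(I_{z,\tilde z}(A,B)\,x\bigr)_i \qquad \text{for all } i \in [1,m],
\]
which follows by writing $c_{ij} = \tfrac{a_{ij}+b_{ij}}{2} + s_{ij}\tfrac{|a_{ij}-b_{ij}|}{2}$ with $|s_{ij}| \leq 1$ and $z_i \tilde z_j s_{ij} \geq -1$, and summing against $x_j$ (whose sign is $\tilde z_j$). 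This is the computational heart of the argument and the step I expect to require the most care, since one must track the interaction of the three sign patterns (of the entries of $C-\tfrac{A+B}{2}$, of $x$, and of $Cx$) uniformly over the submatrix indices.

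Granting that inequality, the forward direction is immediate: if $\mathbb{I}(A,B)$ is totally negative of order $k$, then in particular $C^\pm(A,B) \in \mathbb{I}(A,B)$ (each entry is $t a_{ij} + (1-t) b_{ij}$ for $t \in \{0,1\}$), so they are totally negative of order $k$. For the converse, suppose $C^\pm(A,B)$ are totally negative of order $k$; I must show every $C \in \mathbb{I}(A,B)$ is totally negative of order $k$. First, $C < 0$ entrywise: each entry $c_{ij}$ lies between $a_{ij}$ and $b_{ij}$, and both of these appear (up to sign bookkeeping) among the entries of $C^+$ and $C^-$, which are negative; more directly, the $(i,j)$ entries of $C^+(A,B)$ and $C^-(A,B)$ are $\min(a_{ij},b_{ij})$ and something $\le \max(a_{ij},b_{ij})$ depending on the parity of $i \pm j$, and one checks that $\max(a_{ij},b_{ij})$ is dominated by a negative entry of $C^\pm$, so $c_{ij} < 0$. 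Then I verify condition (3) of Theorem \ref{tn-sign-rev_k} for $C$: fix a contiguous square submatrix $C_r$ of $C$ of size $r \in [2,k]$, occupying rows $I$ and columns $J$, and fix $0 \neq x \in \altr{r}$. Replacing $x$ by $-x$ if necessary, assume $D_{\tilde z} x \geq 0$ where $\tilde z = \pm \bd{r}$ is the alternating sign pattern of $x$ on columns $J$; set $z = \pm\bd{r}$ to be the corresponding alternating pattern on rows $I$. The displayed inequality gives $z_i (C_r x)_i \geq z_i (C'_r x)_i$ for all $i$, where $C'_r$ is the corresponding contiguous submatrix of the appropriate $C^\pm(A,B)$ (the one whose global alternating sign convention restricts to $z$ on $I$ and $\tilde z$ on $J$). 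Since $C^\pm(A,B)$ is $TN_k$ — sorry, totally negative of order $k$ — it has the sign non-reversal property on $\altr{r}$ by Theorem \ref{tn-sign-rev_k}(3), so there is a coordinate $i$ with $(C'_r x)_i$ having sign $z_i x_i$... here I must be slightly careful: the sign non-reversal property asserts $x_i (C'_r x)_i > 0$ for some $i$, i.e. $\operatorname{sgn}(C'_r x)_i = \operatorname{sgn}(x_i)$. Because $x_i$ alternates with pattern $\tilde z$ restricted suitably, $\operatorname{sgn}(x_i)$ equals $z_i$ up to a global sign, and the inequality $z_i(C_r x)_i \ge z_i(C'_r x)_i > 0$ (after the global-sign normalization) then forces $x_i (C_r x)_i > 0$ as well.

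There is one subtlety I must handle in the converse: in the displayed inequality the roles of $A, B$ and the choice of which of $C^\pm$ to use depend on whether the rows $I$ and columns $J$ start at an even or odd index, since $\bd{m}$ restricted to $I$ is $\pm \bd{r}$ according to the parity of $\min I$, and similarly for $\bd{n}$ and $J$. So for a given contiguous submatrix, exactly one of $C^+(A,B)$ or $C^-(A,B)$ has its alternating convention restricting to the sign pattern that makes the inequality point the right way, and I invoke that one. Finally, the $k=1$ case in the statement is immediate (total negativity of order $1$ just means all entries negative), and the case $r=1$ never arises in condition (3) since it starts at $r=2$, consistent with the hypothesis $k \geq 1$ once the $k=1$ case is dispatched separately. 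Assembling: forward direction by membership, reverse direction by the entrywise inequality plus Theorem \ref{tn-sign-rev_k}(1)$\Leftrightarrow$(3), with the parity bookkeeping determining which extreme matrix to test against. The main obstacle, as noted, is proving the entrywise inequality cleanly and organizing the parity conventions so that the two matrices $C^\pm(A,B)$ together cover all contiguous submatrices and all alternating test vectors.
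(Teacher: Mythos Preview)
Your proposal is correct and follows essentially the same route as the paper: reduce to Theorem~\ref{tn-sign-rev_k}(3), transfer the sign non-reversal property from $C^\pm(A,B)$ to an arbitrary $C\in\mathbb{I}(A,B)$ via the entrywise inequality $z_i(Cx)_i \ge z_i(I_{z,\tilde z}(A,B)x)_i$, and handle the parity of the starting row/column index to decide which of $C^+$ or $C^-$ is the relevant extreme matrix. The only difference is cosmetic: the paper quotes your key inequality (in the special case $z=\tilde z$ equal to the sign pattern of $x$) as the Rohn--Rex lemma (Lemma~\ref{rohn_exten2}), whereas you derive it inline from $c_{ij}=\tfrac{a_{ij}+b_{ij}}{2}+s_{ij}\tfrac{|a_{ij}-b_{ij}|}{2}$; and the paper states $M<0$ in one line, whereas you unpack it.
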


\begin{rem}\label{tnintrem}
	Note that $C^+(A,B)$ and $C^-(A,B)$ are independent of the choice of $k$.
\end{rem}

%
%
%

In the last part of our discussion of total negativity, we characterize
totally negative matrices using the Linear Complementarity Problem (LCP)
which generalizes and unifies linear and quadratic programming problems
and bimatrix games. Given a real square matrix $A \in \mathbb{R}^{n
\times n}$ and a vector $q \in \mathbb{R}^n$, the {\it Linear
Complementarity Problem}, denoted by $\lcp{A}$ asks to determine, if possible, a vector $x \in \mathbb{R}^n$ such that
\begin{equation}
x \geq 0,\quad y=Ax+q \geq 0, \quad and \quad x^Ty=0.\label{lcpdefn}
\end{equation}
 Any $x$ that satisfies the above three conditions is called a
 \textit{(complementarity) solution} of $\lcp{A}$. Let $\sol{A}$ denote
 the set of all solutions of $\lcp{A}$.
The Linear Complementarity Problem has important applications in diverse
areas in mathematics, including bimatrix games, convex quadratic
programming, economics, fluid mechanics, and variational inequalities
\cite{CD68,CPS09,Cr71,L65}. A plethora of matrix classes have been
studied in connection with the Linear Complementarity Problem, for
example, $P$-matrices \cite{I66,STW58}, $N$-matrices \cite{PR90} (more
broadly, see \cite[Chapter 3]{CPS09}), and recently, $TP$ matrices
\cite{C21}. For more details about Linear Complementarity Problems and
their applications, we refer to \cite{Co68,CPS09,I70}. Our final main
result characterizes total negativity via the solution set of the LCP.
\begin{utheorem}\label{tn-lcp_1}
	Let $m,n \geq k \geq 1$ be integers. Given $A \in \mathbb{R}^{n \times n}$, the following statements
	are equivalent:
	\begin{enumerate}
		\item The matrix $A$ is totally negative of order $k$.
		\item For every square submatrix $A_r$ of $A$ of size
		$r\in [1,k]$, $\lcp{A_r}$ has exactly two solutions for all $q \in \rn{r}$ with $q>0$.
		\item For every contiguous square submatrix $A_r$ of $A$
		of size $r\in [1,k]$, $\lcp{A_r}$ has exactly two solutions for all $q \in \rn{r}$ with $q>0$.
		\item For every contiguous square submatrix $A_r$ of $A$
		of size $r \in [2,k]$ and for all $q \in \rn{r}$ with
		$q>0$, $\sol{A_r}$ does not contain two vectors which have sign patterns $\begin{pmatrix}
		+ \\ 0 \\+\\0\\\vdots
		\end{pmatrix}$ and $\begin{pmatrix}
		0 \\ + \\0\\+\\\vdots
		\end{pmatrix}$. Moreover for $r=1$ and all $i\in [1,m],~j
		\in[1,n]$, $\lcp{(a_{ij})_{1\times 1}}$ has exactly two solutions for some scalar $q>0$. \smallskip
		
	\end{enumerate}
\end{utheorem}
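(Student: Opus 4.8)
My plan is to establish the cyclic chain $(1)\Rightarrow(2)\Rightarrow(3)\Rightarrow(4)\Rightarrow(1)$, offloading the bulk of the combinatorial work to Theorem~\ref{tn-sign-rev_k}. For $(1)\Rightarrow(2)$: if $A$ is totally negative of order $k$ and $A_r$ is an $r\times r$ submatrix with $r\le k$, then every principal minor of $A_r$ is a minor of $A$ of order $\le r\le k$, hence negative, so $A_r$ is an $N$-matrix; since moreover $A<0$, it is an $N$-matrix of the second category, and by the classical theory relating $N$-matrices to the Linear Complementarity Problem (\cite{PR90}; see also \cite{CPS09}), $\mathrm{LCP}(A_r,q)$ has exactly two solutions for every $q>0$. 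The step $(2)\Rightarrow(3)$ is immediate. For $(3)\Rightarrow(4)$ I would note that whenever $q>0$ the vector $x=0$ lies in $\mathrm{SOL}(A_r,q)$ (here $y=q>0$ and $x^Ty=0$), so ``exactly two solutions'' forces a \emph{unique} nonzero solution; since the two sign patterns displayed in $(4)$ are distinct nonzero vectors, at most one of them can belong to $\mathrm{SOL}(A_r,q)$, which is exactly the $r\ge 2$ clause of $(4)$, while the $r=1$ clause of $(4)$ is the weakening (``for some $q>0$'') of the $r=1$ instance of $(3)$.

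The substantive implication is $(4)\Rightarrow(1)$. A one-line computation shows that for a scalar $a$ and $q>0$ one has $\mathrm{SOL}((a),q)=\{0,-q/a\}$, which has exactly two elements precisely when $a<0$; hence the $r=1$ clause of $(4)$ forces $A<0$. This already settles $(1)$ when $k=1$, and in general it supplies the standing hypothesis of Theorem~\ref{tn-sign-rev_k}. Assume now $k\ge 2$; I will verify condition~$(3)$ of Theorem~\ref{tn-sign-rev_k} — that every contiguous $r\times r$ submatrix $A_r$, $2\le r\le k$, has the sign non-reversal property on $\altr{r}$ — by contradiction. If some such $A_r$ reverses a vector, there is $\zeta\in\altr{r}$, which we normalize so that $\zeta_1>0$ (hence $\zeta_i$ has sign $(-1)^{i-1}$), with $\zeta_i(A_r\zeta)_i\le 0$ for all $i$. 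Put $z_i:=|\zeta_i|>0$, let $O$ and $E$ be the odd and even positions in $[1,r]$, and define $q$ coordinatewise by $q_i:=-((A_r)_{OO}\,z_O)_i$ for $i\in O$ and $q_i:=-((A_r)_{EE}\,z_E)_i$ for $i\in E$; since $A<0$ this gives $q>0$.

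The crucial observation is that, writing $(A_r\zeta)_i=((A_r)_{OO}z_O)_i-((A_r)_{OE}z_E)_i$ for $i\in O$ and symmetrically $(A_r\zeta)_i=((A_r)_{EO}z_O)_i-((A_r)_{EE}z_E)_i$ for $i\in E$, the reversal inequality $\zeta_i(A_r\zeta)_i\le 0$ at coordinate $i$ is \emph{exactly} the inequality ``$y_i\ge0$'' required for one of two candidate complementarity solutions. Concretely, the vector $x^{(1)}$ equal to $z_O$ on $O$ and $0$ on $E$, and the vector $x^{(2)}$ equal to $z_E$ on $E$ and $0$ on $O$, both lie in $\mathrm{SOL}(A_r,q)$: the choice of $q$ makes $y_i=0$ on the support of each $x^{(j)}$, and the reversal inequalities make $y_i\ge0$ off the support. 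These two vectors have exactly the two sign patterns displayed in $(4)$, contradicting $(4)$. Hence condition~$(3)$ of Theorem~\ref{tn-sign-rev_k} holds, and since $A<0$, that theorem gives that $A$ is totally negative of order $k$, closing the cycle.

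The main obstacle is this last construction: one has to recognize that a sign-reversed alternating test vector is, up to coordinatewise signs, precisely a pair of complementary solutions for a cleverly chosen right-hand side $q>0$, so that the single hypothesis $\zeta_i(A_r\zeta)_i\le 0$ simultaneously certifies feasibility of \emph{both} the odd-supported and the even-supported solution; getting the bookkeeping of the block splitting $[1,r]=O\sqcup E$ and the sign conventions right is the one place the argument could go wrong. A secondary point requiring care is to cite the correct form of the $N$-matrix/LCP result in $(1)\Rightarrow(2)$ — namely that an $N$-matrix of the second category has exactly two LCP solutions for every strictly positive $q$ — and to dispatch the degenerate cases $r=1$ and $k=1$ by hand, for which Theorem~\ref{tn-sign-rev_k} (which requires $k\ge2$) is not available.
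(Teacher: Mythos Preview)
Your proposal is correct and follows essentially the same route as the paper. The paper also proves the cycle $(1)\Rightarrow(2)\Rightarrow(3)\Rightarrow(4)\Rightarrow(1)$, invoking the $N$-matrix result (Theorem~\ref{lcpn}, i.e.\ \cite{PR90}) for $(1)\Rightarrow(2)$ and reducing $(4)\Rightarrow(1)$ to the sign non-reversal criterion of Theorem~\ref{tn-sign-rev_k}; the only cosmetic difference is that for the key construction the paper writes $x^{\pm}:=\tfrac12(|x|\pm x)$ and $q:=v^{+}-A_r x^{+}=v^{-}-A_r x^{-}$ (with $v=A_r x$), which under the assumed reversal inequalities coincides exactly with your odd/even block definition of $q$ and produces the same pair of complementary solutions $x^{(1)},x^{(2)}$.
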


We improve this result by providing a characterization of totally negative matrices in terms of the LCP at certain single vector, for each contiguous submatrix.

\begin{utheorem}\label{tn-lcp_2}
	Let $m,n \geq k \geq 1$ be integers.
	Given $A \in \mathbb{R}^{m \times n}$, the following statements
	are equivalent.
	\begin{enumerate}
		\item The matrix $A$ is totally negative of order $k$.
		\item For every $r \in [1,k]$ and contiguous $r
		\times r$ submatrix $A_r$ of $A$, define the vectors
		\begin{equation}\label{tlcp}
		x^{A_r} := (A^{11}_r,0,A^{13}_r,0,\ldots)^T, \qquad q^{A_r}: =A_r x^{A_r}.
		\end{equation}
		Then $0$ and $-x^{A_r}$ are the only solutions of
		$\lcpn{A_r,q^{A_r}}$.
		
	\end{enumerate}
\end{utheorem}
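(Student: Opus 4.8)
The plan is to derive $(1)\Rightarrow(2)$ from Theorem~\ref{tn-lcp_1} and to prove $(2)\Rightarrow(1)$ by induction on the order, the main device being a second, ``complementary'' candidate solution of $\mathrm{LCP}(A_r,q^{A_r})$. First I would record the algebraic structure of the data. Set $y^{A_r}:=(0,A^{12}_r,0,A^{14}_r,\dots)^T\in\mathbb{R}^r$, the companion of $x^{A_r}$ supported on the even coordinates. The first column of $\adj(A_r)$ is the vector of first-row cofactors $\big((-1)^{1+j}A^{1j}_r\big)_{j=1}^r$, so comparing entries gives $\adj(A_r)e^1=x^{A_r}-y^{A_r}$, and hence, using $A_r\,\adj(A_r)=\det(A_r)I$,
\[
q^{A_r}=A_rx^{A_r}=\det(A_r)\,e^1+A_ry^{A_r}.
\]
Moreover, whenever $A<0$ and (for $r\ge2$) $A$ is totally negative of order $r-1$, each $A^{1j}_r$ with $1\le j\le r$ is negative --- for $r=1$ this is the convention $A^{11}_1=-1$, and for $r\ge2$ it is a minor of order $r-1$ --- so $x^{A_r}\le0$, $x^{A_r}\ne0$, $y^{A_r}\le0$, and, combining with $a_{ij}<0$, the vector $q^{A_r}=A_rx^{A_r}$ is entrywise positive. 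Consequently $0\in\mathrm{SOL}(A_r,q^{A_r})$ (as $q^{A_r}\ge0$) and $-x^{A_r}\in\mathrm{SOL}(A_r,q^{A_r})$ (as $-x^{A_r}\ge0$ while the slack $A_r(-x^{A_r})+q^{A_r}=0$ makes complementarity automatic), and these two solutions are distinct.

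For $(1)\Rightarrow(2)$: if $A$ is totally negative of order $k$, then for every $r\in[1,k]$ the hypotheses above hold, so $q^{A_r}>0$; while by Theorem~\ref{tn-lcp_1} --- condition $(3)$ there --- the problem $\mathrm{LCP}(A_r,q)$ has exactly two solutions for every contiguous $r\times r$ submatrix $A_r$ of $A$ and every $q>0$. Taking $q=q^{A_r}$ and recalling the two distinct solutions $0$ and $-x^{A_r}$ exhibited above, we conclude that these are the only two, which is $(2)$.

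For $(2)\Rightarrow(1)$ I would prove by induction on $r\in[1,k]$ that $A$ is totally negative of order $r$. For $r=1$, the contiguous $1\times1$ submatrices are the entries $(a_{ij})$, with $x^{A_1}=(-1)$ and $q^{A_1}=(-a_{ij})$; a short case analysis of $\mathrm{LCP}((a_{ij}),-a_{ij})$ over the signs of $a_{ij}$ shows that its solution set equals $\{0,-x^{A_1}\}=\{0,1\}$ precisely when $a_{ij}<0$, so $(2)$ at $r=1$ forces $A<0$. For the inductive step fix $r\in[2,k]$, assume $A$ is totally negative of order $r-1$, and let $A_r$ be any contiguous $r\times r$ submatrix. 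By the inductive hypothesis $A^{11}_r<0$ and $A^{12}_r<0$, so $x^{A_r}\ne0$ and $y^{A_r}\ne0$; since $x^{A_r}$ and $y^{A_r}$ are supported on the odd and the even coordinates respectively, the vectors $0$, $-x^{A_r}$, $-y^{A_r}$ are pairwise distinct. Suppose, toward a contradiction, that $\det(A_r)\ge0$. Then $z:=-y^{A_r}\ge0$, and by the displayed identity $A_rz+q^{A_r}=\det(A_r)\,e^1\ge0$, while $z^T(\det(A_r)\,e^1)=-\det(A_r)\,y^{A_r}_1=0$ because the first coordinate of $y^{A_r}$ vanishes; hence $-y^{A_r}\in\mathrm{SOL}(A_r,q^{A_r})$ is a third solution, contradicting $(2)$. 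Thus $\det(A_r)<0$ for every contiguous $r\times r$ submatrix of $A$. By the Fekete-type fact that a matrix which is totally negative of order $r-1$ and all of whose contiguous $r\times r$ minors are negative is totally negative of order $r$ (an analogue of Fekete's criterion for total positivity, which also underlies Theorem~\ref{tn-sign-rev_k}), we conclude that $A$ is totally negative of order $r$. This completes the induction.

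The step I expect to be the main obstacle is the inductive step of $(2)\Rightarrow(1)$: identifying the right companion vector $y^{A_r}$, checking that $-y^{A_r}$ is a bona fide solution of $\mathrm{LCP}(A_r,q^{A_r})$ exactly when $\det(A_r)\ge0$, and making sure the inductive hypothesis (total negativity of order $r-1$, not merely negativity of the contiguous minors) is strong enough both to keep the three solutions distinct and to run the concluding reduction from contiguous minors to arbitrary minors.
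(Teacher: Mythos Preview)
Your proposal is correct and follows essentially the same route as the paper: $(1)\Rightarrow(2)$ via Theorem~\ref{tn-lcp_1}, and $(2)\Rightarrow(1)$ by induction on $r$ using Karlin's contiguous-minor criterion (Theorem~\ref{fec}) together with the companion vector $y^{A_r}=(0,A^{12}_r,0,A^{14}_r,\dots)^T$ (the paper calls it $z^{A_r}$) to manufacture a forbidden third solution of $\mathrm{LCP}(A_r,q^{A_r})$ whenever $\det A_r\ge 0$. The only difference is cosmetic: the paper treats the singular case $\det A_r=0$ and the case $\det A_r>0$ separately, whereas you handle both at once via $\det A_r\ge 0$, which is a slight streamlining.
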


We end by explaining the organization of the paper. The next three sections contain the proofs of our main results above: the variation diminishing property; the sign non-reversal and interval hull results; and the results involving the LCP. Also note that it is natural to ask for `totally non-positive' analogues of the above main results, in the spirit of $TN$ analogues of results for $TP$ matrices. In the penultimate section of the paper, we indeed provide these -- see Theorems \ref{TNP-vandimnew}, \ref{tnp-snr}, \ref{Tnp_int_k},  and Propositions \ref{tnplcp1} and \ref{tnplcp2}, respectively. In the final section, we show that test vectors from open orthants other than the alternating bi-orthant cannot be used in the above characterizations of total negativity/non-positivity. In this sense our characterization results are `best' possible.

We end on a somewhat philosophical note. The rich applications of $TP$ matrices to multiple areas like analysis, approximation theory, combinatorics, differential equations, etc.\ stem from (and make use of) the numerous desirable properties and characterizations that totally positive/non-negative matrices and kernels possess. The main results in the present paper obtain similar characterization results for totally negative and totally non-positive matrices. We hope that these results will lead to applications of these classes of matrices in other areas.

\section{Theorem \ref{TN-vandimnew}: Characterization of total negativity using variation diminution}\label{vdsec}

We begin by proving Theorem \ref{TN-vandimnew} -- i.e., to provide a characterization of totally negative matrices using variation diminution. This requires a 1968 result of Karlin for totally negative matrices (in fact strictly sign-regular matrices) which was first proved by Fekete in 1912 for $TP$ matrices, and subsequently extended by Schoenberg to $TP_k$ in 1955.
\begin{theorem}[Karlin~\cite{K68}]\label{fec}
	Let $m,n \geq k \geq 1$ be integers. Then $A \in \mathbb{R}^{m \times n}$
	is totally negative of order $k$ if and only if every $r \times r$ contiguous submatrix of $A$ has negative determinant, for $r \in [1,k]$.
\end{theorem}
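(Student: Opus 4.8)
The plan is to follow the classical Fekete--Schoenberg template, adapted to the strictly negative sign pattern. One direction is trivial: if $A$ is totally negative of order $k$, then in particular every contiguous $r \times r$ submatrix with $r \in [1,k]$ is a square submatrix of order $\leq k$, hence has negative determinant. The content is the converse, so assume every $r \times r$ contiguous submatrix of $A$ has negative determinant for all $r \in [1,k]$; we must show \emph{every} $r \times r$ submatrix (not just contiguous ones) has negative determinant for $r \in [1,k]$.

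I would argue by induction on $r$. The base case $r=1$ is immediate, since ``contiguous'' is vacuous for single entries: all entries are negative by hypothesis. For the inductive step, fix $r \in [2,k]$ and suppose all square submatrices of order $\leq r-1$ are negative (they are, by the inductive hypothesis applied at sizes $1,\dots,r-1$, since the order-$(r-1)$ contiguous hypothesis is part of our standing assumption). Take an arbitrary $r \times r$ submatrix $M$ of $A$, with rows indexed by $p_1 < \dots < p_r$ and columns by $q_1 < \dots < q_r$. The key step is a \emph{spreading/contracting} argument: I will show that if $M$ has a ``gap'' in its row indices --- say $p_{s+1} > p_s + 1$ for some $s$ --- then $\det M$ can be related, via a Sylvester-type determinantal identity (or a direct expansion/Laplace argument along the offending row), to determinants of submatrices whose row index sets are ``closer to contiguous,'' all of which are negative by a combination of the inductive hypothesis and the contiguous hypothesis. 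Iterating this over rows and then columns, one reduces $\det M$ to a statement about a contiguous $r\times r$ submatrix, which is negative by assumption; one tracks the signs through the identity to conclude $\det M < 0$.

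The cleanest implementation uses Sylvester's identity in the following shape: for a contiguous block extended by one extra row and column on each relevant side, Sylvester's identity expresses the determinant of the big block times the determinant of the inner block as a $2\times 2$ determinant of ``bordered'' minors; since all inner and bordered minors here have known (negative) signs by induction, one solves for the sign of the larger minor. The main obstacle --- and where care is genuinely needed --- is the \textbf{sign bookkeeping}: unlike the TP case where all minors are positive and Sylvester's identity reads ``positive $=$ positive $\cdot$ positive $-$ positive $\cdot$ positive'' (which is \emph{not} automatically positive!), here the strict negativity must be propagated with the right parity, and one must choose the Sylvester decomposition so that the identity forces the desired strict inequality rather than leaving it ambiguous. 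Concretely, because an $r \times r$ negative minor ``wants'' the matrix to look like a rank-one-plus-correction perturbation of $\bd{r}(\bd{r})^T$-type sign structure, the right move is to pass through the substitution $A \mapsto D_{\bd{m}} A D_{\bd{n}}$ (or an analogous diagonal conjugation by alternating signs) which converts total negativity of order $k$ into a mixed sign-regularity condition with signs $\varepsilon_1 = -1$ and a controlled pattern thereafter, and then invoke the already-classical strictly-sign-regular Fekete--Schoenberg theorem (Karlin, \cite{K68}) for that sign sequence. I would present the proof this way: reduce to the known strictly-sign-regular statement by the diagonal transformation, rather than re-deriving the Sylvester cascade from scratch.
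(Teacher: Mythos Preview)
First, note that the paper does not prove Theorem~\ref{fec} at all: it is stated as a preliminary result attributed to Karlin~\cite{K68} and then used as a black box throughout. There is therefore no in-paper argument to compare your proposal against, and your attempt must stand on its own.

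Your general template (induction on $r$, close gaps in the row/column index sets via a Sylvester-type identity) is indeed the classical route, but the concrete execution you commit to is broken. The substitution $A \mapsto D_{\bd{m}} A D_{\bd{n}}$ sends $a_{ij}$ to $(-1)^{i+j}a_{ij}$; when $A<0$ this produces a matrix with entries of both signs, so the image is not sign-regular even of order~$1$. More generally, a $\{\pm1\}$-diagonal conjugation $D_1 A D_2$ multiplies the minor on rows $P$ and columns $Q$ by $\prod_{p\in P}(D_1)_{pp}\prod_{q\in Q}(D_2)_{qq}$, and this factor depends only on $|P|=|Q|$ precisely when $D_1,D_2\in\{\pm I\}$. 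Hence the only sign-regular matrices reachable from a totally negative $A$ by such a move are $A$ itself and $-A$ (the latter strictly sign-regular with $\varepsilon_r=(-1)^{r+1}$), neither of which is TP. Your fallback --- invoking ``the already-classical strictly-sign-regular Fekete--Schoenberg theorem (Karlin)'' for whatever sign sequence emerges --- is circular: Theorem~\ref{fec} \emph{is} that theorem specialised to $\varepsilon_r\equiv -1$, and the general statement in~\cite{K68} is exactly what is being cited here. The honest path is the one you outlined and then declined to carry out: Karlin's gap-closing induction is written uniformly for an arbitrary strict sign sequence $(\varepsilon_r)$, and the determinantal identity he uses forces the sign of the ``dispersed'' minor from those of less-dispersed $r$-minors and $(r-1)$-minors without any case split on the $\varepsilon_r$. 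You should present that argument directly rather than attempt a reduction that does not exist.
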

\begin{proof}[Proof of Theorem \ref{TN-vandimnew}]
	We begin by showing $(1) \implies (2)$. Let $A \in \mathbb{R}^{m \times n}$ be a totally negative matrix and $x \in \pn{n}$ with $1\leq S^-(x)=k\leq n-1$. Then $x$ can be partitioned into $k+1$ components of contiguous coordinates with like signs:
	
	\begin{equation}\label{vdpart}
	(x_1,\ldots ,x_{s_1}),~~(x_{s_1 +1},\ldots ,x_{s_2}), ~~ \ldots ,(x_{s_k +1},\ldots ,x_{n}),
	\end{equation}
	with at least one coordinate in each component nonzero and all nonzero coordinates in the $i$th component having the same sign $(-1)^{i-1}$, without loss of generality. Moreover, we set $s_0=0$ and $s_{k+1}=n$. Denote the columns of $A$ by  $a^1,\ldots, a^n \in \mathbb{R}^n$, and define
	
	\[b^i:= \sum\limits_{j=s_{i-1}+1}^{s_i}\vert x_j \vert a^j, \hbox{~for~} i\in [1,{k+1}].
	\]
	One can verify that the matrix $B:=[b^1,\ldots, b^{k+1}] \in \mathbb{R}^{m \times (k+1)}$ is totally negative and $B\bd{k+1}=Ax$.
	
	With this information in hand, we now prove $S^+(Ax)\leq S^-(x)=k$. For ease of exposition, we split the remainder of the proof into two cases.
	
	\noindent \textbf{Case 1. $m\leq {k+1}.$}
	
	If $Ax\neq 0$, then $S^+(Ax)\leq {m-1} \leq S^-(x)$. If $Ax=0$, then $m \leq k$, since $Ax=B\bd{k+1}$ and $B$ is invertible if $m=k+1$. Thus $S^+(Ax)=m \leq S^-(x)$.
	
	\noindent \textbf{Case 2. $m> {k+1}.$}
	
	Set $y:=Ax=B\bd{k+1}$. If $S^+(Ax)>k$, then there exist indices $i_1<i_2<\cdots < i_{k+2}\in [1,m]$ and a sign $\epsilon \in \{+,-\}$ such that $(-1)^{r-1}\epsilon y_{i_r}\geq 0$ for $r \in [1, {k+2}]$. Since $B$ is totally negative, at least two of the $y_{i_r}$ are nonzero. Let $I=\{i_1,\ldots, i_{k+2}\}$ and define the $(k+2)\times (k+2)$ matrix \[M:=[y_I|B_{I \times [1, {k+1}]}].\]
	Then $\det M=0$, since the first column of $M$ is a linear combination of the rest. Moreover, expanding along the first column gives
	\begin{equation}
	0=\sum_{r=1}^{k+2} (-1)^{r-1}y_{i_r} \det B_{{I\setminus \{i_r\}}\times [1, {k+1}]},
	\end{equation}
	
	\noindent a contradiction, since $B$ is totally negative, all $(-1)^{r-1}y_{i_r}$	have the same sign, and at least two $y_{i_r}$ are nonzero. Thus $S^+(Ax)\leq k=S^-(x)$.
	
	It remains to prove the second part of  the assertion $(2)$. We continue our discussion using the notation in the preceding analysis. We claim that, if $S^+(Ax)=S^-(x)=k$ with $Ax\neq 0$, and $(-1)^{r-1}\epsilon y_{i_r}\geq 0$ for $r \in [1,{k+1}]$ -- as opposed to $[1,{k+2}]$, then $\epsilon=1$. Since $B$ is totally negative and $B\bd{k+1}=Ax$, so the submatrix $B_{I\times [1, {k+1}]}$ is invertible and $B_{I\times [1,~k+1]}\bd{k+1}=y_I$, where $I=\{i_1,\ldots, i_{k+1}\}$. By Cramer's rule, the first coordinate of $\bd{k+1}$ is
	\[1=\frac{\det [y_I|B_{I\times [2,{k+1}]}]}{\det B_{I\times [1, {k+1}]}}.\]
	Multiplying both sides by $\epsilon \det B_{I\times [1, {k+1}]}$ and expanding the numerator along the first column, we have
	
	\begin{equation}
	\epsilon \det B_{I\times [1, {k+1}]}=\sum_{r=1}^{k+1} (-1)^{r-1}\epsilon y_{i_r} \det B_{I\setminus \{i_r\}\times [2,{k+1}]}.
	\end{equation}
	Since the summation on the right side is negative and $B$ is totally negative, we obtain $\epsilon=1$.
	\vspace{0.5cm}	

We next show that $(3) \implies (1)$. By Theorem \ref{fec},
it suffices to show for all $r \times r$ ($r\in [1,k]$) contiguous
submatrices $A_r$ of $A$ that $\det A_r<0$. The proof is by induction on
$r \in [1,k]$. The base case holds by the hypothesis.
	
	Let $A_r$ be an $r \times r$ contiguous submatrix of $A$ with $r \in [2,k]$ and suppose that all contiguous minors of $A$ of size at most $(r-1)$ are negative. The same holds for all proper minors of $A_r$ by Theorem \ref{fec}. Define the vector $x^{A_r}$ as in \eqref{tnvdeq1}.
	Then \begin{equation}\label{tnvdeq2}
	x^{A_r}_i=   \sum_{j=1}^r (-1)^{j-1} \alpha_j \cdot (-1)^{(i-1) +
	(j-1)} A^{ji}_r=(-1)^{i-1} \sum_{j=1}^r  \alpha_j A^{ji}_r
	\end{equation}
	and  if the $\alpha_i$'s are non-negative (or non-positive), then the expression on the right is negative (or positive) for odd $i$ and positive (or negative) for even $i$. It follows that
	\begin{equation}\label{tnvdeq3}
	x^{A_r}\in \altr{r},\qquad A_r x^{A_r}=(\det{A_r})\alpha.
	\end{equation}
	
	We first claim that $A_r$ is invertible. Suppose instead that $A_r$ is singular. Then $r=S^+(A_rx^{A_r})> S^-(x^{A_r})=r-1$, a contradiction by $(3)$, and hence $A_r$ is invertible as claimed.
	
	Next we show that $\det {A_r}<0$. By \eqref{tnvdeq3}, we have \[r-1=S^+(A_rx^{A_r})= S^-(x^{A_r}),
	\]
	since the condition on the $\alpha_i$'s implies that
	$S^+(\alpha)=r-1$.
	Also, the sign of the first (last) component of
	$\frac{1}{\det A_r}A_rx^{A_r}$ (if zero, the sign that attains $S^+(\frac{1}{\det A_r}A_rx^{A_r})$)
	is positive or negative if the $\alpha_i$'s are non-negative or
	non-positive respectively. 
	According to the two lines following
	\eqref{tnvdeq2}, we know the sign of the first (last) component of
	$x^{A_r}$, and can use hypothesis~(3) to conclude with
	\eqref{tnvdeq3} that $\det{A_r} < 0$. This completes the
	induction step and also the proof of  $(3) \implies (1)$.

	Finally, we show that $(2) \implies (3)$, by induction on
	$r \in [2,k]$. We know that $A$ is totally negative of order
	$r-1$ (by hypothesis if $r=2$, and by the induction hypothesis
	combined with $(3) \implies (1)$ if $r>2$). Suppose $A_r=A_{I
	\times J}$ is an $r \times r$ contiguous submatrix of $A$, for
	contiguous sets of indices $I\subseteq [1,m]$ and $J \subseteq
	[1,n]$ with $\lvert I\rvert = \lvert J \rvert=r$. Define
	$x^{A_r}$ as in \eqref{tnvdeq1}. As above, $A_r$ is
	totally negative of order $r-1$, so the entries in each row and
	column of $\adj(A_r)$ have alternating signs. Since $\alpha \neq
	0$, \eqref{tnvdeq1} implies that $x^{A_r} \in \altr{r} $. Now
	define $x\in \pn{n}$ to have $x^{A_r}$ in contiguous positions
	$J\subseteq [1,m]$ and $0$ elsewhere. Then 
	\begin{equation}  S^+(A_rx^{A_r})\leq S^+(Ax)\quad \hbox{and}\quad S^-(x)=S^-(x^{A_r}).\label{vdeq2}
	\end{equation} By $(2)$, we have  $S^+(A_rx^{A_r}) \leq S^-(x^{A_r})$.
	
	Next, suppose that $S^+(A_rx^{A_r}) = S^-(x^{A_r})=r-1$. Then $S^+(Ax)=S^-(x)$. 
	 Since  $S^+(A_rx^{A_r}) = S^+(Ax)$, by (2), it follows that all coordinates of $Ax$ or an $S^+$-completion of $Ax$ in positions $1,2,\ldots, i_1$ (respectively, in positions  $i_r,\ldots, m$) are nonzero, and with the same sign as that of $x^{A_r}_1 ~(\hbox{respectively}~x^{A_r}_r)$. This shows  $(2) \implies (3)$.
\end{proof}

\section{Theorems \ref{tn-sign-rev_k} and \ref{tn_hull_k}: Sign non-reversal property and interval hulls for totally negative matrices}\label{tpsec}

We next prove Theorems \ref{tn-sign-rev_k} and \ref{tn_hull_k}.  To prove
Theorem \ref{tn-sign-rev_k}, we begin with a preliminary result, which
establishes the sign non-reversal property for matrices with negative
principal minors (these are known as $N$-matrices):

\begin{theorem}\cite{PR90}\label{snrn}
	Let  $n \geq 2$ be an integer and $A \in \mathbb{R}^{n \times n}$ such that $A<0$. Then $A$ has all principal minors
	negative if and only if for all $ x \in \pn{n}$, there exists
	$i \in [1,n]$ such that $x_i (Ax)_i > 0$.
\end{theorem}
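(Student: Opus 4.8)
I would prove this as an equivalence between the condition (N) ``all principal minors of $A$ are negative'' and the condition (SNR) ``$A$ reverses the sign of no $x\in\pn{n}$'', where throughout ``$A$ reverses the sign of $x$'' means $x_i(Ax)_i\le 0$ for every $i$. Since $A<0$ already forces $a_{ii}<0$, the $1\times 1$ principal minors are harmless in both directions, so everything below is about minors of size $\ge 2$. The plan is to prove (SNR)$\Rightarrow$(N) directly and (N)$\Rightarrow$(SNR) by induction on $n$ after reducing it to a pure sign‑pattern statement.

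For (SNR)$\Rightarrow$(N): first I would note that (SNR) is inherited by every principal submatrix $A_J$, because if $A_J$ reversed the sign of some $y\in\pn{|J|}$ then the extension of $y$ by zero coordinates would be a vector of $\pn{n}$ whose sign $A$ reverses. Hence it suffices to show that an $n\times n$ matrix $A<0$ satisfying (SNR) has $\det A<0$; applying this to every $A_J$ then yields (N). Suppose $\det A\ge 0$. If $\det A=0$, a kernel vector $0\ne x$ either lies in $\pn{n}$, contradicting (SNR), or lies in a closed orthant, in which case $Ax$ is strictly one‑signed (as $A<0$) and therefore nonzero, a contradiction. If $\det A>0$, the Schur complement $A/a_{11}$ has determinant $\det A/a_{11}<0$, so it is not a $P$‑matrix, and by the classical Fiedler--Pt\'ak characterization of $P$‑matrices it reverses the sign of some $0\ne w$. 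Setting $x:=(x_1,w)$ with $x_1$ chosen so that $(Ax)_1=0$ gives $(Ax)_{2:n}=(A/a_{11})w$, so $A$ reverses the sign of $x$; moreover $x\in\pn{n}$, since otherwise $Ax$ would be strictly one‑signed and so $(Ax)_1\ne 0$ — contradicting (SNR).

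For (N)$\Rightarrow$(SNR): suppose some $x\in\pn{n}$ has $x_i(Ax)_i\le 0$ for all $i$. Passing to $A_{\operatorname{supp}(x)}$ lowers $n$ (so the induction hypothesis applies; the base case $n=2$ is a one‑line determinant inequality), so we may assume all $x_i\ne 0$; conjugating by $\operatorname{diag}(|x_i|)$ — which preserves $A<0$ and all principal minors — we may assume $x=\mathbf{1}_S-\mathbf{1}_T$ for a partition $[n]=S\sqcup T$ with $S,T\ne\emptyset$. Then $B:=D_xAD_x$ has all principal minors negative, has the ``checkerboard'' sign pattern (diagonal blocks $B_{SS},B_{TT}<0$, off‑diagonal blocks $B_{ST},B_{TS}>0$), and satisfies $B\mathbf{1}\le 0$. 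So the whole direction reduces to the claim: \emph{a checkerboard N‑matrix cannot satisfy $B\mathbf{1}\le 0$}. When $\min(|S|,|T|)=1$ — which covers all $n\le 3$, and by the $S\leftrightarrow T$ symmetry we may take $|S|=\{i_0\}$ — I would take the Schur complement $\widehat B$ of the single entry $b_{i_0i_0}$: its principal minors are ratios of two negative determinants, so $\widehat B$ is a $P$‑matrix, and using $B\mathbf{1}\le 0$ together with $B_{ST},B_{TS}>0$ one checks $\widehat B\mathbf{1}_T\le 0$ — impossible, because a $P$‑matrix $P$ always has $(P\mathbf{1})_i>0$ for some $i$ (Fiedler--Pt\'ak at $\mathbf{1}$).

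The hard part is this last claim when $\min(|S|,|T|)\ge 2$, so $n\ge 4$. The Schur complement of the whole block $B_{SS}$ is still a $P$‑matrix, but its row‑sum vector equals $(B\mathbf{1})_T-B_{TS}B_{SS}^{-1}(B\mathbf{1})_S$, and since the inverse of a negative N‑matrix has positive diagonal entries (hence is not entrywise signed) one cannot conclude this is $\le 0$ for free — the single‑entry argument above genuinely breaks. I expect the fix to need either an induction on $\min(|S|,|T|)$ that peels off indices while controlling the evolving sign pattern, or a more global argument that plays the row‑sum inequalities against the $2\times 2$ principal‑minor inequalities $|b_{ii}|\,|b_{jj}|<b_{ij}b_{ji}$ for $i\in S$, $j\in T$ (and the analogous within‑block inequalities); this combinatorial interplay is the real content of the N‑matrix characterization and is where I would spend most of the effort.
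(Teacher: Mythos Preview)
The paper does not prove Theorem~\ref{snrn}; it is quoted from Parthasarathy--Ravindran \cite{PR90} as a known preliminary result and used as a black box in the proof of Theorem~\ref{tn-sign-rev_k}. So there is no ``paper's own proof'' to compare against here.

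That said, your proposal contains a genuine gap, and you flag it yourself. The (SNR)$\Rightarrow$(N) direction is correct: the inheritance of (SNR) by principal submatrices via zero-padding, the kernel argument for $\det A=0$ (a nonzero vector in a closed orthant cannot be annihilated by a strictly negative matrix), and the Schur-complement/Fiedler--Pt\'ak trick for $\det A>0$ all go through as written. In the (N)$\Rightarrow$(SNR) direction, the reduction to a checkerboard N-matrix $B$ with $B\mathbf{1}\le 0$ is clean, and your single-pivot Schur complement correctly handles $\min(|S|,|T|)=1$: the identity $\widehat B\,\mathbf{1}_T=(B\mathbf{1})_T-B_{T,i_0}b_{i_0i_0}^{-1}(B\mathbf{1})_{i_0}$ together with the sign information indeed forces $\widehat B\,\mathbf{1}_T\le 0$, contradicting that $\widehat B$ is a $P$-matrix.

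But for $\min(|S|,|T|)\ge 2$ you have only outlined two possible strategies---iterated pivoting, or playing row-sum inequalities against $2\times 2$ minor inequalities---without executing either. You yourself call this ``the real content'' and ``where I would spend most of the effort,'' and you correctly note that the full-block Schur complement $B/B_{SS}$ fails because $(B_{SS})^{-1}$ is not entrywise signed once $|S|\ge 2$. As it stands, then, the proof is incomplete precisely at its main case: every $n\ge 4$ with at least two coordinates of each sign is unhandled, and the induction on $n$ (via passing to the support) never reaches this situation. To finish you must actually carry out one of your two sketched arguments, or else appeal to the structural results on N-matrices in \cite{PR90} (where both this theorem and Theorem~\ref{lcpn} originate).
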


\begin{proof}[Proof of Theorem \ref{tn-sign-rev_k}]
	That $(1) \implies (2)$ follows from Theorem \ref{snrn}; and that
	$(2) \implies (3)$ is immediate. Next, suppose
	$(4)$ holds. To show $(1)$, by Theorem \ref{fec} it suffices to show that the
	determinants of  all $r \times r$ contiguous submatrices of $A$
	are negative, for $r \in [1,k]$. We prove this by induction on
	$r$, where the base case holds by the hypothesis. Fix $r \in
	[2,k]$ and assume that all $(r-1)\times (r-1)$ and smaller
	contiguous minors of $A$ are negative. Let $A_r$ be an $r \times
	r$ contiguous submatrix of $A$. By the induction hypothesis, all
	the proper contiguous minors of $A_r$ are negative. In fact, all
	the proper minors of $A_r$ are negative by Theorem \ref{fec}.
	
	 For any choice of a nonzero vector $\alpha:=(\alpha_1,-\alpha_2,\ldots ,(-1)^{r-1}\alpha_r)^T$ with all $\alpha_i\geq 0$ (or $\leq 0$), define the vector $x^{A_r}$ as in \eqref{tnsnreq1}. Then \begin{equation}\label{tnsnreq2}
		x^{A_r}_i=   \sum_{j=1}^r (-1)^{j-1} \alpha_j \cdot
		(-1)^{(i-1) + (j-1)} A^{ji}_r=(-1)^{i-1}\sum_{j=1}^r
		\alpha_j A^{ji}_r
	\end{equation}
	and  if the $\alpha_i$'s are non-negative (or non-positive), then the expression on the right is negative (or positive) for odd $i$ and positive (or negative) for even $i$. Thus $x^{A_r} \in \altr{r}$, and 
	\begin{equation}\label{tnsnreq3}
		A_rx^{A_r}=(\det A_r) \alpha.
	\end{equation}
	By the hypothesis, \eqref{tnsnreq2}, and \eqref{tnsnreq3}, we have $i_0 \in [1,r]$ such that
	\begin{equation}\label{tnsnreq4}
	0< x^{A_r}_{i_0}(A_rx^{A_r})_{i_0} =(-1)^{i_0-1} (\det A_r)\alpha_{i_0}x^{A_r}_{i_0}.
	\end{equation}
	From this, it follows that $\det A_r<0$, which completes the
	induction step. Hence $(4) \implies (1)$.

	Finally, we show that $(3) \implies (4)$, by induction on $r \in
	[2,k]$. We know $A$ is totally negative of order $r-1$ (by
	hypothesis if $r = 2$, and by the induction hypothesis and $(4)
	\implies (1)$ if $r>2$). As in \eqref{tnsnreq2} and lines following it,  $x^{A_r}\in \altr{r}$ since
	$\alpha \neq 0$. Now $(3)$ immediately implies $(4)$.
\end{proof}

To prove Theorem \ref{tn_hull_k}, we need two preliminary lemmas; the first is straightforward.
\begin{lemma}\label{int-lem1}
	Let $m, n \geq 1$ and $A,B \in \mathbb{R}^{m \times n}$. Then $I_{z,\tilde{z}}(A,B) \in \mathbb{I}(A,B)$ for all $z \in \{ \pm 1 \}^m, \tilde{z} \in \{ \pm 1 \}^n$.
\end{lemma}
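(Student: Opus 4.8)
The plan is to check the membership of $I_{z,\tilde z}(A,B)$ in $\mathbb{I}(A,B)$ one entry at a time. First note that, by \eqref{hulleqn}, a matrix $C$ lies in $\mathbb{I}(A,B)$ exactly when each $c_{ij}$ belongs to the closed interval with endpoints $a_{ij}$ and $b_{ij}$: writing $t_{ij}a_{ij} + (1-t_{ij})b_{ij} = b_{ij} + t_{ij}(a_{ij}-b_{ij})$ and letting $t_{ij}$ range over $[0,1]$ sweeps out precisely $[\min(a_{ij},b_{ij}),\max(a_{ij},b_{ij})]$, and the constraints on different $(i,j)$ are independent. So it suffices to show that the $(i,j)$ entry of $I_{z,\tilde z}(A,B)$ lies between $a_{ij}$ and $b_{ij}$.

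Next I would write that entry out explicitly. Since $D_z\,\frac{|A-B|}{2}\,D_{\tilde z}$ has $(i,j)$ entry $z_i\tilde z_j\,\frac{|a_{ij}-b_{ij}|}{2}$, we get
\[
I_{z,\tilde z}(A,B)_{ij} \;=\; \frac{a_{ij}+b_{ij}}{2} - z_i\tilde z_j\,\frac{|a_{ij}-b_{ij}|}{2} \;=\; b_{ij} + \frac{(a_{ij}-b_{ij}) - z_i\tilde z_j\,|a_{ij}-b_{ij}|}{2}.
\]
Setting $d := a_{ij}-b_{ij}$ and $\varepsilon := z_i\tilde z_j \in \{\pm 1\}$, and using the identities $\tfrac{d-|d|}{2} = \min(0,d)$ and $\tfrac{d+|d|}{2} = \max(0,d)$, this entry equals $b_{ij} + \min(0,d) = \min(a_{ij},b_{ij})$ when $\varepsilon = 1$, and equals $b_{ij} + \max(0,d) = \max(a_{ij},b_{ij})$ when $\varepsilon = -1$. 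In either case the entry is one of $a_{ij}$ or $b_{ij}$, hence trivially lies in the required interval, and the lemma follows.

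There is essentially no obstacle here; the only points requiring care are the bookkeeping of the sign $z_i\tilde z_j$ and the fact that the interval hull is defined coordinatewise, so that different entries impose no joint constraint. It is worth recording, as the computation shows, that in fact \emph{every} entry of $I_{z,\tilde z}(A,B)$ coincides with the corresponding entry of either $A$ or $B$; thus $I_{z,\tilde z}(A,B)$ is one of the finitely many ``vertex'' matrices of $\mathbb{I}(A,B)$. This stronger observation is precisely what will make $C^\pm(A,B) = I_{\bd{m},\pm\bd{n}}(A,B)$ effective as the two-matrix test set in Theorem \ref{tn_hull_k}.
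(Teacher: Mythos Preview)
Your proof is correct; the paper itself does not supply a proof of this lemma, merely declaring it ``straightforward,'' and your entrywise verification is exactly the natural argument it has in mind. The additional observation that each entry of $I_{z,\tilde z}(A,B)$ equals the corresponding entry of $A$ or of $B$ (so that these are vertex matrices of the hull) is a nice bonus.
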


\begin{lemma}\cite{RJRG}\label{rohn_exten2}
	Let $A,B \in \mathbb{R}^{n \times n}$ and $x \in \mathbb{R}^n$. Let the vector $z \in \{ \pm 1 \}^n$ be such that $z_i
	= 1$ if $x_i \geq 0$ and $z_i = -1$ if $x_i < 0$. If $C \in
	\mathbb{I}(A,B)$, then
	\[
	x_i(Cx)_i\geq x_i (I_{z,z}(A,B) x)_i, \quad \forall i \in [1,n].
	\]
\end{lemma}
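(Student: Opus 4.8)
The plan is to reduce the stated vector inequality to a scalar, term-by-term estimate. Fix a coordinate $i \in [1,n]$. Since $C \in \mathbb{I}(A,B)$ means $c_{ij} = t_{ij}a_{ij} + (1-t_{ij})b_{ij}$ for some $t_{ij} \in [0,1]$ — equivalently, $c_{ij}$ lies in the closed interval with endpoints $a_{ij}$ and $b_{ij}$, so $\min(a_{ij},b_{ij}) \le c_{ij} \le \max(a_{ij},b_{ij})$ — I would first expand both sides of the desired inequality. We have $x_i(Cx)_i = \sum_{j=1}^n x_i x_j\, c_{ij}$, while from the definition $I_{z,z}(A,B)_{ij} = \tfrac{a_{ij}+b_{ij}}{2} - z_i \tfrac{|a_{ij}-b_{ij}|}{2}\, z_j$ one gets $x_i(I_{z,z}(A,B)x)_i = \sum_{j=1}^n \bigl( x_i x_j \tfrac{a_{ij}+b_{ij}}{2} - (z_ix_i)(z_jx_j)\tfrac{|a_{ij}-b_{ij}|}{2} \bigr)$. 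Hence it suffices to prove, for each fixed $j$,
\[
x_i x_j\, c_{ij} \;\ge\; x_i x_j \frac{a_{ij}+b_{ij}}{2} \;-\; (z_ix_i)(z_jx_j)\,\frac{|a_{ij}-b_{ij}|}{2}.
\]

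The key observation is that the sign choice $z_\ell = 1$ when $x_\ell \ge 0$ and $z_\ell = -1$ when $x_\ell < 0$ forces $z_\ell x_\ell = |x_\ell|$ for every $\ell$ (trivially for $x_\ell \ge 0$, and for $x_\ell < 0$ both sides equal $-x_\ell$; note the boundary case $x_\ell = 0$ is covered since $z_\ell = 1$ there). Therefore $(z_ix_i)(z_jx_j) = |x_i|\,|x_j| = |x_i x_j|$, and the displayed inequality becomes $x_i x_j\, c_{ij} \ge x_i x_j \tfrac{a_{ij}+b_{ij}}{2} - |x_i x_j|\tfrac{|a_{ij}-b_{ij}|}{2}$. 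I would then split on the sign of the scalar $x_i x_j$. If $x_i x_j \ge 0$, the right-hand side equals $x_i x_j \min(a_{ij},b_{ij})$, and multiplying $c_{ij} \ge \min(a_{ij},b_{ij})$ by the nonnegative factor $x_i x_j$ yields the claim. If $x_i x_j < 0$, the right-hand side equals $x_i x_j \max(a_{ij},b_{ij})$, and multiplying $c_{ij} \le \max(a_{ij},b_{ij})$ by the negative factor $x_i x_j$ reverses the inequality and again yields the claim. Summing the term-wise inequality over $j \in [1,n]$ completes the argument.

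There is no genuine obstacle here; the only care required is the sign bookkeeping — verifying $z_\ell x_\ell = |x_\ell|$ in all cases and keeping straight which of $\min$, $\max$ appears after the case split. Equivalently, one may note that $t \mapsto x_i x_j\bigl(t a_{ij} + (1-t)b_{ij}\bigr)$ is affine on $[0,1]$, so its minimum over that interval is attained at an endpoint, and the case split merely records which endpoint is the minimizer; this reformulation makes the proof conceptually transparent but does not shorten it.
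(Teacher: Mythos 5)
Your proof is correct. The paper does not give a proof of this lemma --- it is cited from Rohn--Rex \cite{RJRG} --- so there is nothing to compare against verbatim, but your argument (reduce to the scalar inequality for each pair $(i,j)$, use $z_\ell x_\ell = |x_\ell|$ to rewrite the correction term as $|x_i x_j|\,|a_{ij}-b_{ij}|/2$, then split on the sign of $x_i x_j$ to identify the right-hand side as $x_i x_j \min(a_{ij},b_{ij})$ or $x_i x_j \max(a_{ij},b_{ij})$, and sum over $j$) is exactly the standard direct verification and is the argument underlying the cited result.
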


With these preliminaries at hand, we now prove Theorem \ref{tn_hull_k}.

\begin{proof}[Proof of Theorem \ref{tn_hull_k}]
	The result is immediate for $k=1$, so we suppose henceforth $m,n\geq k\geq 2$. Let $\mathbb{I}(A, B)$ be totally negative of order $k$. By  Lemma \ref{int-lem1}, $C^\pm(A,B)$ are totally negative of order $k$.
	
	Conversely, suppose $C^\pm(A,B)$ are totally negative of order $k$, and let $M \in
	\mathbb{I}(A,B)$. Then $M<0$, since $C^+(A,B),C^-(A,B)<0$. Fix $r \in [2,k]$ and a vector $x \in
	\altr{r}$. Let $M'=M_{J \times K}$ be an $r \times r$ contiguous submatrix of
	$M$, where $J\subseteq[1,m]$ and $K \subset [1,n]$ are contiguous sets of indices  with $|J| = |K| = r$. By Theorem~\ref{tn-sign-rev_k}(3), it suffices to
	show  that there exists a coordinate $i \in [1,r]$ such that $x_i (M'x)_i > 0$.
	
	To proceed further, we need some notation. Let $A', B'$ be contiguous
	submatrices of $A,B$ respectively, with the rows and columns of both indexed by $J$ and $K$, respectively. Since $M' \in \mathbb{I}(A',B')$,
	Lemma~\ref{rohn_exten2} implies there exists $z' \in \{ \pm 1 \}^r \cap \altr{r}$ such
	that
	\begin{equation}\label{Erohn}
	x_i (M' x)_i \geq x_i (I_{z', z'}(A',B')x)_i,\qquad \forall i \in [1,r].
	\end{equation}
	Since $M' = M_{J \times K}$, we extend $z'$ to $z \in \altr{m} \cap \{\pm 1 \}^m$ ($\tilde{z}\in \altr{n} \cap \{\pm 1 \}^n$) by embedding in contiguous positions $J\subseteq [1,m]$ ($K \subseteq [1,n]$) and uniquely padding by $\pm 1$ elsewhere. 
	Then $I_{z',z'}(A',B')$ is a $r \times r$ contiguous submatrix of
	\[
	I_{z,\; \tilde{z}}(A,B) := \frac{A+B}{2}
	- D_{z} \frac{|A-B|}{2} D_{\tilde{z}}
	\in \{ C^+(A,B), C^-(A,B) \}.
	\]
	By hypothesis, this matrix is totally negative of order $k$. Thus $I_{z',z'}(A',B')$ is totally negative. Using Theorem~\ref{tn-sign-rev_k}(3) and~\eqref{Erohn}, for some $i\in [1,r]$, we have
	\[
	x_i (M' x)_i \geq x_i (I_{z',z'}(A',B')x)_i > 0.
	\]
	Thus $M'$ has the sign non-reversal property on $\altr{r}$. Since $M<0$, by Theorem~\ref{tn-sign-rev_k}, $M$ is totally negative of order $k$.
\end{proof}

\begin{rem}
	Theorem~\ref{tn_hull_k} can be alternately shown using Garloff's
	results for $n \times n$ matrices in \cite{Gar82}. These results
	(also see \cite[Chapter~3]{pinkus}) have been stated using the
	checkerboard ordering $\leq^*$, wherein
	\[
	A \leq ^* B \quad \Longleftrightarrow \quad D_{\bd{n}} (B - A)
	D_{\bd{n}} \geq 0_{n \times n}.
	\]
	It is now easily seen that the composition of the hull
	$\mathbb{I}(A,B)$ does not change upon using the entrywise or the
	checkerboard ordering:
	\[
	\mathbb{I}(A,B) = \{ C : I_l \leq C \leq I_u \} = \{ C : C^-(A,B) \leq^*
	C \leq^* C^+(A,B) \}.
	\]
	This implies that the test set $\{ C^+(A,B), C^-(A,B) \}$ will
	work regardless of the ordering used -- and it will work for all
	rectangular matrices, as well as in testing the property of being
	totally negative of order $k$, for any $k \geq 1$.
\end{rem}

\section{Theorems \ref{tn-lcp_1} and \ref{tn-lcp_2}: Characterization of total negativity via the LCP}
In this section, we show Theorems \ref{tn-lcp_1} and \ref{tn-lcp_2}. To proceed, we recall a 1990 result which establishes a connection between the LCP and matrices with negative principal minors.
\begin{theorem}\cite{PR90}\label{lcpn}
	A matrix $A \in \mathbb{R}^{n \times n}$ with $A<0$ has all principal minors negative if and only if 
	$\lcp{A}$ has exactly two solutions for all $q \in \mathbb{R}^{n}$ with $q>0$.
\end{theorem}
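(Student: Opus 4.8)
The plan is to establish Theorem~\ref{lcpn} -- the Parthasarathy--Ravindran \cite{PR90} characterization of $N$-matrices of the second category -- through the piecewise-linear degree theory of the LCP. Attach to $A$ the map $g\colon\rn{n}\to\rn{n}$, $g(z)=z^{-}-Az^{+}$, where $z^{\pm}$ are the positive and negative parts of $z$; then $x$ solves $\mathrm{LCP}(A,q)$ if and only if $g(x-y)=q$ for the slack $y=Ax+q$, so the solution set of $\mathrm{LCP}(A,q)$ equals $\{z^{+}:z\in g^{-1}(q)\}$. On the orthant $O_S$ where $z_i\ge 0$ exactly for $i\in S$, $g$ is linear, with Jacobian block triangular whose diagonal blocks are $-A_{SS}$ and $-I$, hence of determinant $(-1)^{n}\det A_{SS}$; its image $C_S:=g(O_S)$ is the usual complementary cone, and for generic $q$ the fibre $g^{-1}(q)$ has exactly one point in each $O_S$ with $q\in C_S$. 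In particular $C_\emptyset=\rn{n}_{\ge0}$ contains every $q>0$ and carries Jacobian sign $(-1)^{n}$, while -- the engine of the proof -- if all principal minors of $A$ are negative, every nonempty $S$ carries the opposite sign $(-1)^{n+1}$.

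For the forward direction, assume all principal minors of $A$ are negative. Since every $A_{SS}$ is invertible, $\mathrm{LCP}(A,0)$ has $x=0$ as its only solution, so $g$ is proper and has a well-defined degree $\deg(g)$. I claim $\deg(g)=0$: take $q^{*}=(-M,1,\dots,1)^{T}$ with $M$ large and check that $\mathrm{LCP}(A,q^{*})$ has no solution. If $1$ lay outside the support of a solution $x$, then $y_1\le q^{*}_1=-M<0$, absurd; if $1$ lay in the support $S$, then $x_S=-A_{SS}^{-1}q^{*}_S$ is, as $M\to\infty$, dominated by $(M+1)$ times the first column of $A_{SS}^{-1}$, a column with a strictly negative entry because $A_{SS}<0$ forbids $A_{SS}^{-1}$ from having a nonnegative column -- so $x_S\not\ge0$ once $M$ is large. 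Hence $q^{*}$ is a regular value with empty fibre, giving $\deg(g)=0$. Now for any $q>0$ the piece $O_\emptyset$ already accounts for the solution $x=0$, so for generic such $q$ the degree identity reads $0=(-1)^{n}+m\,(-1)^{n+1}$, where $m$ is the number of nonempty $S$ with $q\in C_S$; thus $m=1$, and $\mathrm{LCP}(A,q)$ has exactly two solutions. A limiting argument removes the genericity: solutions are uniformly bounded and $\mathrm{SOL}(A,\cdot)$ has closed graph, so the count is $\le 2$ for all $q>0$, while a nonzero solution of support $S$ cannot degenerate to $0$ since its coordinates are pinned to $-A_{SS}^{-1}q_S\ne 0$, so the count is also $\ge 2$.

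For the converse, argue the contrapositive. If some principal minor of $A$ is $\ge0$, pick a principal submatrix $A_{TT}$ of least size $t:=|T|$ with $\det A_{TT}\ge0$; then $t\ge2$ (as $a_{ii}<0$), $A_{TT}<0$, and by minimality every \emph{proper} principal minor of $A_{TT}$ is negative, so among the principal minors of $A_{TT}$ only $\det A_{TT}$ itself can fail to be negative. Apply the analysis above to the $t\times t$ matrix $A_{TT}$. If $\det A_{TT}>0$: then $A_{TT}$ is invertible, the associated map is proper with degree $0$ as before, its full-support cone meets $\rn{t}_{>0}$ (take $q_T=-A_{TT}\mathbf{1}>0$, so $x=\mathbf{1}$ is a solution), and at such a $q_T$ the full-support piece now carries sign $(-1)^{t}$, like the empty one, so the identity (degree $=0$) forces two more (proper-support, negative-minor) cones, hence four solutions of $\mathrm{LCP}(A_{TT},q_T)$. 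If $\det A_{TT}=0$: choose $x_0>0$, put $q_T:=-A_{TT}x_0>0$, and then the whole segment $\{x_0+\lambda v:|\lambda|<\varepsilon\}$, with $0\ne v\in\ker A_{TT}$, lies in $\mathrm{SOL}(A_{TT},q_T)$, which is therefore infinite. In either case, lifting by this $q_T$ together with a sufficiently large $q_{\bar T}$ produces $q>0$ for which $\mathrm{LCP}(A,q)$ inherits (by padding block solutions with zeros outside $T$) four or infinitely many solutions -- never exactly two.

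I expect the genuine obstacle to be making the degree bookkeeping watertight: proving $g$ proper, certifying that $q^{*}$ (equivalently, some explicit direction) has no complementary solution so that $\deg(g)=0$ -- the one place where negativity of \emph{all} entries of $A$, not merely of all principal minors, is indispensable -- and then handling non-generic $q>0$ via the boundedness/closed-graph argument rather than the smooth degree formula. In the converse, the subcase $\det A_{TT}=0$ and the routine but fiddly lifting step (showing the block solutions extend and that no new solutions appear) likewise require a short separate argument.
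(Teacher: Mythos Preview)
The paper does not prove Theorem~\ref{lcpn}; it is quoted from Parthasarathy--Ravindran \cite{PR90} and used as a black box in the proof of Theorem~\ref{tn-lcp_1}. So there is no ``paper's own proof'' to compare against, and your proposal is really a self-contained argument to be judged on its merits.

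Your degree-theoretic approach is standard and essentially correct. The forward direction---showing $\deg(g)=0$ via the vector $q^{*}=(-M,1,\dots,1)^{T}$, then reading off the generic count from the Jacobian signs---is clean, and your observation that $A_{SS}<0$ forces every column of $A_{SS}^{-1}$ to have a negative entry is exactly the place where the entrywise negativity of $A$ (not just of its minors) is used. The converse, via a minimal bad principal submatrix $A_{TT}$ and a lifting, is also correct; note that in the lifting step you only need to exhibit at least four (or infinitely many) solutions of $\mathrm{LCP}(A,q)$, so the concern that ``no new solutions appear'' is unnecessary.

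There is one genuine gap. Your passage from generic to arbitrary $q>0$ in the forward direction asserts that ``closed graph plus boundedness'' yields $|\mathrm{SOL}(A,q)|\le 2$. Upper semicontinuity of the solution map does not bound cardinality: three solutions at a boundary $q$ could each lie on $\partial C_{S_i}$ and disappear under different perturbations. A clean repair is to invoke Theorem~\ref{snrn} directly: if $x,x'$ are two distinct nonzero solutions, set $z:=x-x'$ and check from complementarity that $z_i(Az)_i\le 0$ for every $i$; if $z\in\pn{n}$ this contradicts sign non-reversal, while if $z\ge 0$ (say) then $A<0$ forces $Az<0$, hence $y'>0$ on the support of $x$, hence $x'=0$. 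Either way there is at most one nonzero solution, and your $\ge 2$ argument (the nonzero solution cannot degenerate to $0$ since $-A_{SS}^{-1}q_S\neq 0$ for $q>0$) is already correct.
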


\begin{proof}[Proof of Theorem \ref{tn-lcp_1}]
	That $(1) \implies (2)$ follows from Theorem \ref{lcpn}, while $(2)
	\implies (3) \implies (4)$ is immediate. We now show $(4)
	\implies (1)$. We first claim that $A<0$. Indeed, if  $a_{ij}\geq
	0$ for some $i\in [1,m]$ and $j \in [1,n]$, then $(0)$ is the
	only solution of $\lcp{(a_{ij})_{1\times 1}}$ for any scalar $q>0$.
	
	Next, we show that the determinants of all $r \times r$ submatrices of $A$ are negative, for $r \in [2,k]$. By Theorem \ref{tn-sign-rev_k}, it suffices to show for that every contiguous square submatrix of $A$ of size $ r \in [2,k]$ has the sign non-reversal property on $\altr{r}$. Let $r \in [2,k]$ and $A_{r}$ be an $r \times r$ contiguous submatrix of $A$ which  does not satisfy this property. Then there exists $x \in \altr{r}$ such that $x_i(A_r x)_i\leq 0$ for all $i \in [1,r]$. Let $A_rx=v$. Define $x^{\pm}:=\frac{1}{2}(\vert x \vert \pm x)$ and $v^{\pm}:=\frac{1}{2}(\vert v \vert \pm v)$. Then $x^+$, $x^-$ have sign patterns $\begin{pmatrix}
	+ \\ 0 \\+\\0\\\vdots
	\end{pmatrix}$, $\begin{pmatrix}
	0 \\ + \\0\\+\\\vdots
	\end{pmatrix}$ respectively (or vice-versa) and ${(x^+)}^T v^+ ={(x^-)}^T v^- =0$, since $x_i v_i\leq 0$ for all $i \in [1,r]$. Define
	\begin{equation}
	q:=v^+-A_rx^+=v^--A_rx^-.\label{thrmaeq1}
	\end{equation}
	Then $q>0$, since $A<0$. Thus $\lcp{A_r}$ has solutions with sign patterns $\begin{pmatrix}
	+ \\ 0 \\+\\0\\\vdots
	\end{pmatrix}$ and $\begin{pmatrix}
	0 \\ + \\0\\+\\\vdots
	\end{pmatrix}$, a contradiction. Hence every contiguous square submatrix of $A$ of size $r \in [2,k]$ has the
	sign non-reversal property on $\altr{r}$ and the result follows.
\end{proof}

\begin{proof}[Proof of Theorem \ref{tn-lcp_2}]
	That $(1) \implies (2)$ follows from Theorem \ref{tn-lcp_1}. To
	show  $(2) \implies (1)$,  by Theorem \ref{fec}, it suffices to
	show, for $r\in[1,k]$, that  all contiguous minors of $A$ of size
	$r$ are negative. The proof is by induction on $r$. Let $r=1$ and
	$A_1=(a_{ij})$ for some $i\in[1,m]$ and $j \in [1,n]$. Then
	$x^{A_1}=(-1)$ by convention, and $q^{A_1}=-(a_{ij})$. If
	$a_{ij}=0$, then $\lcpn{A_1, q^{A_1}}$ has infinitely many
	solutions, while if $a_{ij}>0$, then $\lcpn{A_1,q^{A_1}}$ has only one solution, a contradiction. Thus all entries of $A$ are negative.
	
	For the induction step, suppose $A_r$ is an $r \times r$ contiguous submatrix of $A$, with $r\in [2,k]$ and all contiguous minors of $A$ of size at most $r-1$ are negative. By Theorem \ref{fec}, all proper minors of $A_r$ are negative. Define
	\begin{equation}\label{lcpsingleeq1}
	x^{A_r}:=(A^{11}_r,0,A^{13}_r,0,\ldots)^T, \qquad z^{A_r}:=(0,A^{12}_r,0,A^{14}_r,\ldots)^T,\qquad q^{A_r}:=A_rx^{A_r}.
	\end{equation}  Then $x^{A_r},~z^{A_r}\leq 0$ and $q^{A_r}>0$.
	Thus $\bf{0}$ and $-x^{A_r}$ are two solutions of
	$\lcpn{A_r,q^{A_r}}$.
	
	We next claim that $A_r$ is invertible. If not, then $A_rx^{A_r}=A_rz^{A_r}$.
	So $-z^{A_r}$ is also a solution of $\lcpn{A_r,q^{A_r}}$, a contradiction. This shows the claim.
	
	Finally we show that $\det{A_r}<0$. Indeed, suppose $\det{A_r}>0$. Then
	\begin{center}
		$y=-A_rz^{A_r}+q^{A_r}=(\det{A_r})e^1 \geq 0 \hbox{~~~ and~~~} y^Tz^{A_r}=0$.
	\end{center}
	Thus $-z^{A_r}$ is also a solution of $\lcpn{A_r, q^{A_r}}$, a contradiction by the hypothesis (2). Thus $\det{A_r}<0$. This completes the induction step  and hence the proof.
\end{proof}
\begin{rem}
	In Theorem \ref{tn-lcp_2}, in place of the vector $x^{A_r}= (A^{11}_r,0,A^{13}_r,0,\ldots)^T$, one can take ${x^i}^{A_r} := (A^{i1}_r,0,A^{i3}_r,0,\ldots)^T$ for odd $i\in [1,r]$, or a positive linear combination of these vectors. The result still holds with a similar proof, where we define $q^{A_r}$ similarly as in \eqref{tlcp}.  
\end{rem}

\section{Characterizations for totally non-positive matrices}\label{sectnp}
This section presents the counterparts of the above main results for totally non-positive matrices as promised in the introduction. We first provide a characterization of totally non-positive matrices in terms of variation diminution.  For this, we require a 1950 density result of Gantmacher--Krein for totally negative matrices (in fact strictly sign regular matrices).

\begin{theorem}\cite{GK50}\label{tando}
	Given integers $m,n \geq k \geq 1$, the set of $m \times n$ totally negative matrices of order $k$ is dense in the set of $m \times n$ totally non-positive matrices of order $k$.
\end{theorem}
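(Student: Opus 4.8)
The plan is to deduce the density statement from the structure of (strictly) sign-regular matrices, in three steps.
\textbf{Step 1 (reduction to contiguous minors).} By Theorem~\ref{fec}, an $m\times n$ matrix is totally negative of order $k$ exactly when each of its $r\times r$ contiguous submatrices, $1\le r\le k$, has strictly negative determinant --- finitely many continuous conditions, all of which a totally non-positive matrix $A$ of order $k$ already satisfies non-strictly. So it suffices to produce, for each such $A$ and each $\delta>0$, a matrix $A'$ with $\|A-A'\|<\delta$ all of whose $r\times r$ contiguous minors ($r\le k$) are strictly negative.

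\textbf{Step 2 (totally positive smoothing).} Fix strictly totally positive families $G_t\in\mathbb{R}^{m\times m}$ and $\tilde G_t\in\mathbb{R}^{n\times n}$ with $G_t\to I_m$, $\tilde G_t\to I_n$ as $t\to 0^+$ --- for instance the Gaussian matrices $G_t=\bigl(e^{-(i-j)^2/t}\bigr)_{i,j}$, which are strictly totally positive for every $t>0$ --- and set $B_t:=G_tA\tilde G_t\to A$. By the Cauchy--Binet formula each $r\times r$ minor of $B_t$ is a sum of terms $\det((G_t)_{R,S})\det(A_{S,T})\det((\tilde G_t)_{T,C})$ over $|S|=|T|=r$; the outer factors are positive while $\det(A_{S,T})\le 0$, so $B_t$ is again totally non-positive of order $k$, and its $r\times r$ minor is moreover strictly negative whenever $A$ has at least one nonzero $r\times r$ minor. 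Thus after this step the only contiguous minors that can still vanish are those of order exceeding $\rho:=\operatorname{rank}A=\operatorname{rank}B_t$.

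\textbf{Step 3 (rank correction --- the crux).} When $\rho<\min(m,n,k)$ one must raise the rank without letting any contiguous minor turn nonnegative, by adding a small low-rank term coming from a Chebyshev system. The mechanism is already visible in the extreme case $A=-\mathbf{1}\mathbf{1}^{T}$: with $\phi_\ell:=(1^\ell,2^\ell,\dots)^{T}$ and parameters $0<\varepsilon_{k-1}\ll\cdots\ll\varepsilon_1$, the matrix $-\mathbf{1}\mathbf{1}^{T}+\sum_{\ell=1}^{k-1}\varepsilon_\ell\,\phi_\ell\phi_\ell^{T}$ factors as $\Phi\operatorname{diag}(-1,\varepsilon_1,\dots,\varepsilon_{k-1})\Phi^{T}$ with $\Phi$ a generalized Vandermonde matrix, so Cauchy--Binet expresses each of its $r\times r$ minors as $\sum_{|S|=r}\bigl(\prod_{\ell\in S}d_\ell\bigr)\det(\Phi_{R,S})\det(\Phi_{C,S})$ with every Vandermonde factor positive; the dominant monomial in the $\varepsilon$'s is the one using the ``$-1$'' together with $\varepsilon_1,\dots,\varepsilon_{r-1}$, and it is strictly negative, so the minor is $<0$ for suitably separated small $\varepsilon_\ell$, while the matrix tends to $-\mathbf{1}\mathbf{1}^{T}$. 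For a general totally non-positive $A$ one performs the analogous correction along a complement of the row and column spaces of $B_t$ and verifies, again through a Cauchy--Binet plus multilinearity expansion, that for all sufficiently small parameters the sign of every contiguous minor of order $\le k$ is read off from its lowest-order monomial, which is strictly negative; letting the correction parameters go to $0$ faster than $t$ then combines Steps 2 and 3 into the required approximants.

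The genuine obstacle is Step 3: one must certify \emph{simultaneously}, for every contiguous minor of every order $\le k$ and for an arbitrary totally non-positive $A$, that none of the many spurious higher-order terms created by the rank correction ever overtakes the negative leading term; this bookkeeping is exactly what the Gantmacher--Krein argument \cite{GK50} carries out. Equivalently one could quote the general Gantmacher--Krein fact that strictly sign-regular matrices of order $k$ are dense in sign-regular matrices of order $k$ of the same signature and specialise to signature $\varepsilon_r\equiv-1$, but this is proved by the same smoothing-plus-correction scheme and so does not sidestep the difficulty.
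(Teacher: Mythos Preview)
The paper does not give its own proof of this statement: Theorem~\ref{tando} is quoted from Gantmacher--Krein \cite{GK50} and used as a black box, so there is no in-paper argument to compare against. Your sketch is in the spirit of the classical Gantmacher--Krein smoothing argument, and Steps~1 and~2 are correct as stated (the Cauchy--Binet computation does show that two-sided multiplication by strictly TP matrices sends a TNP${}_k$ matrix to a TNP${}_k$ matrix whose $r\times r$ minors with $r\le\operatorname{rank}A$ are all strictly negative).

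However, as you yourself flag, Step~3 is not actually carried out: for a general totally non-positive $A$ you only indicate that ``one performs the analogous correction along a complement of the row and column spaces of $B_t$'' and then appeal to \cite{GK50} for the verification. The toy case $A=-\mathbf{1}\mathbf{1}^T$ is special because there the perturbed matrix factors cleanly through a single Vandermonde matrix, giving every minor a transparent dominant term; for an arbitrary $B_t$ of rank $\rho<k$ there is no such global factorisation, and one must instead control, for every contiguous $r\times r$ minor with $\rho<r\le k$, the interaction between the existing columns/rows of $B_t$ and the added rank-one layers. That interaction is genuinely delicate --- the leading monomial in your $\varepsilon$-expansion is no longer a single Vandermonde product but a sum of mixed terms whose sign is not automatic --- and supplying it is precisely the content of the Gantmacher--Krein proof you cite. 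So your write-up is an honest outline that correctly locates the difficulty, but it is not a self-contained proof; since the paper itself treats the result as a citation, this is consistent with how the statement is used there.
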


Apart from Gantmacher--Krein's density theorem, the proofs require another preliminary lemma on sign changes of limits of vectors; see e.g. \cite{pinkus} for the proof.
\begin{lemma}\label{limsc}
	Given $x=(x_1,\ldots,x_n)^T\in \mathbb{R}^n\setminus \{0\}$, we
	have
	\[
	S^+(x)+S^-(\overline{x})=n-1, \qquad \text{where} \qquad
	\overline{x}:=(x_1,-x_2,x_3,\ldots, (-1)^{n-1}x_n)^T\in
	\mathbb{R}^n.
	\]
	Moreover, if $\lim\limits_{p \to \infty} x_p =x$, then
	
	\[\liminf\limits_{p \to \infty} S^-(x_p)\geq S^-(x), \qquad \limsup\limits_{p \to \infty} S^+(x_p)\leq S^+(x).\]
\end{lemma}

\begin{theorem}\label{TNP-vandimnew}
	Given a real $m \times n $ matrix $A\leq 0$ and $m,n\geq 2$, the following statements are
	equivalent:
	\begin{enumerate}
		\item $A$ is totally non-positive.
		\item For all $x \in \pn{n}$, $S^-(Ax) \leq S^-(x)$. Moreover, if equality occurs and $Ax \neq 0$, then the first (last)
		nonzero component of $Ax$ has the same sign as the first (last)
		nonzero component of $x$. 
		\item Let $k=\min\{m,n\}$. For every square submatrix $A_r$ of $A$ of size $r \in [2,k]$, define the vector
		\begin{equation}\label{vdeqtn1}
		y^{A_r}:= \adj(A_r)\alpha, \hbox{for some}~ \alpha \in \altr{r}.
		\end{equation}
		Then $S^-(A_ry^{A_r}) \leq S^-(y^{A_r})$. If equality occurs here, then the first (last)
		nonzero component of $A_ry^{A_r}$ has the same sign as the first (last)
		nonzero component of $y^{A_r}$.
	\end{enumerate}
\end{theorem}


\begin{proof}
	We first show that $(1) \implies (2)$. Since $A$ is totally non-positive, by Gantmacher--Krein's density Theorem \ref{tando}, there exists a sequence $(A_p)$ of totally negative matrices such that 
	\[\lim\limits_{p \to \infty} A_p =A.\]
	Let $x \in \pn{n}$. By Theorem \ref{TN-vandimnew} and Lemma \ref{limsc}, we have
	\[S^-(Ax)\leq \liminf\limits_{p \to \infty} S^-(A_p x)\leq \liminf\limits_{p \to \infty} S^+(A_px)\leq \liminf\limits_{p \to \infty} S^-(x)=S^-(x).\]
	Next, assume that $S^-(Ax)=S^-(x)=k$ and $Ax\neq 0$. Then for all sufficiently large $p$, by Theorem \ref{TN-vandimnew} and Lemma \ref{limsc}, 
	\[S^-(Ax)\leq S^-(A_px)\leq S^+(A_px)\leq S^-(x)=k.\]
	
	Thus $S^-(A_px)=S^+(A_px)=S^-(x)=k$ for $p$ large enough. This implies the sign patterns in $A_px$ (for large $p$) do not depend on the zero entries. Thus the nonzero sign patterns of $A_p x$ agree with those of $Ax$. Also, by Theorem \ref{TN-vandimnew}, the sign patterns of $x$ agree with the sign patterns of $A_px$. Hence  the same holds for the sign patterns of $Ax$ and $x$.

	Next, we show that $(3) \implies (1)$. The claim that $\det{A_r}\leq 0$ for all $r \times r$ submatrices $A_r$ of $A$, is shown by induction on $r \in [1,k]$. The base case $r=1$ holds by the hypothesis.
	
	For the induction step, $A_r$ is an $r \times r$ submatrix of $A$ and all proper minors of $A_r$  are non-positive. If $\det{A_r}=0$ then there is nothing to prove, so suppose $\det{A_r} \neq 0$. Let $\alpha=(\alpha_1,\alpha_2,\ldots, \alpha_r)^T \in \altr{r}$ and define $y^{A_r}$ as in \eqref{vdeqtn1}. Then no row of $\adj(A_r)$ is zero and $y^{A_r}\in \altr{r}$. Also, $y^{A_r}_i$ is negative (or positive) if $\alpha_i$ is positive (or negative) for all $i \in [1,r]$, by \eqref{tnvdeq2}. Since $A_ry^{A_r}=(\det{A_r}) \alpha$, we have 
	\begin{center}
		$S^-(A_ry^{A_r})=S^-(y^{A_r})=r-1.$
	\end{center}
	By the last part of the hypothesis (3), $\det{A_r}<0$ and so we are done by induction.
	
	Finally, to show $(2)\implies (3)$, repeat the proof of corresponding implication in Theorem \ref{TN-vandimnew}, now working with arbitrary invertible submatrices $A_r$ of $A$ of size $r \in [2,k]$ and the vector $y^{A_r}=\adj (A_r)\alpha $, where $\alpha \in \altr{r}$ is  arbitrarily chosen.
\end{proof}
Analogous to Theorems \ref{tn-sign-rev_k} and \ref{tn_hull_k}, we next derive the sign non-reversal property and interval hull tests for totally non-positive matrices.

\begin{theorem}\label{tnp-snr}
	Let $m,n \geq k \geq 2$ be integers. Given $A \in \mathbb{R}^{m \times
		n}$ with $A\leq 0$, the following statements are equivalent:
	\begin{enumerate}
		\item The matrix $A$ is totally non-positive of order $k$.
		
		\item Every square submatrix of $A$ of size $r\in [2,k]$ has the non-strict
		sign non-reversal property on $\pn{r}$.
		
		\item Every square submatrix of $A$ of size $r \in [2,k]$ has the non-strict
		sign non-reversal property on $\altr{r}$.
		
		\item For every $r \in [2,k]$ and $r \times r$ submatrix $A_r$
		of $A$, the matrix $A_r$ has the non-strict sign non-reversal property for the vector $x^{A_r}=\adj (A_r)\alpha$, for any choice of $\alpha \in \altr{r}$.
	\end{enumerate}
\end{theorem}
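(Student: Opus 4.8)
The plan is to deduce Theorem~\ref{tnp-snr} from its totally negative counterpart Theorem~\ref{tn-sign-rev_k} by passing to limits through the Gantmacher--Krein density result Theorem~\ref{tando}, and to obtain the converse implications by an induction on the size of a submatrix, since no Fekete-type reduction to contiguous minors is available in this setting. Concretely, I would establish $(1)\Rightarrow(2)\Rightarrow(3)$, then $(1)\Rightarrow(4)$, and finally $(3)\Rightarrow(1)$ and $(4)\Rightarrow(1)$ by a common argument. For $(1)\Rightarrow(2)$: using Theorem~\ref{tando}, pick totally negative matrices $A^{(t)}$ of order $k$ with $A^{(t)}\to A$ entrywise (each $A^{(t)}$ is automatically $<0$, since its $1\times1$ minors are negative). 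Fix $r\in[2,k]$, an $r\times r$ submatrix $A_r$ of $A$, the corresponding submatrix $A^{(t)}_r$ of $A^{(t)}$, and a nonzero $x\in\pn{r}$. Each $A^{(t)}_r$ is totally negative, so by Theorem~\ref{tn-sign-rev_k} it has the strict sign non-reversal property on $\pn{r}$, yielding an index $i_t\in[1,r]$ with $x_{i_t}(A^{(t)}_r x)_{i_t}>0$ (hence $x_{i_t}\neq0$). Since $[1,r]$ is finite, some value $i$ is attained by $i_t$ along a sequence $t\to0$, and letting $t\to0$ along it gives $x_i\neq0$ and $x_i(A_rx)_i\geq0$ --- exactly the non-strict sign non-reversal property at $x$. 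The implication $(2)\Rightarrow(3)$ is immediate because $\altr{r}\subseteq\pn{r}$ for $r\geq2$.

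For $(1)\Rightarrow(4)$, fix an $r\times r$ submatrix $A_r$ and $\alpha\in\altr{r}$, and set $\beta_j:=|\alpha_j|>0$; replacing $\alpha$ by $-\alpha$ if necessary (which alters neither any product $x^{A_r}_i(A_rx^{A_r})_i$ nor the conclusion) we may assume the sign of $\alpha_j$ is $(-1)^{j-1}$. A direct computation with $A_r\adj(A_r)=(\det A_r)I$ and the cofactor expansion gives $A_rx^{A_r}=(\det A_r)\alpha$, $x^{A_r}_i=(-1)^{i-1}\sum_{j=1}^r\beta_j A^{ji}_r$, and therefore $x^{A_r}_i(A_rx^{A_r})_i=(\det A_r)\,\beta_i\sum_{j=1}^r\beta_j A^{ji}_r$ for every $i\in[1,r]$. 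Since $A$ is totally non-positive of order $k$ and $r\leq k$, both $\det A_r\leq0$ and every $A^{ji}_r\leq0$, so the right-hand side is $\geq0$ for all $i$; picking any $i$ with $x^{A_r}_i\neq0$ (or noting vacuity when $x^{A_r}=0$) proves $(4)$.

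For $(3)\Rightarrow(1)$ and $(4)\Rightarrow(1)$, I would prove by induction on $r\in[1,k]$ that every minor of $A$ of size $\leq r$ is $\leq0$; the base case $r=1$ is the hypothesis $A\leq0$, which is also why $(2)$--$(4)$ only need $r\geq2$. For the step, suppose some $r\times r$ submatrix $A_r$ had $\det A_r>0$; then $A_r$, hence $\adj(A_r)$, is invertible, so $x^{A_r}=\adj(A_r)\alpha\neq0$ for any $\alpha\in\altr{r}$. With $\alpha,\beta$ as above and $A^{ji}_r\leq0$ by the inductive hypothesis, the identity $x^{A_r}_i(A_rx^{A_r})_i=(\det A_r)\,\beta_i\sum_j\beta_j A^{ji}_r$ shows this product is $\leq0$ for all $i$ and $<0$ exactly when $x^{A_r}_i\neq0$; this already contradicts $(4)$. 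To contradict $(3)$ instead, I would further observe that the linear functional $\beta\mapsto\sum_j\beta_j A^{ji}_r$ is not identically zero (its coefficient vector is $\pm$ the $i$th row of the invertible matrix $\adj(A_r)$), so on the dense open subset of $\{\beta>0\}$ obtained by deleting the finitely many hyperplanes $\{\sum_j\beta_j A^{ji}_r=0\}$ one has $\sum_j\beta_j A^{ji}_r<0$ for every $i$, whence $x^{A_r}\in\altr{r}$; applying $(3)$ to this vector contradicts the strict negativity just established. Hence $\det A_r\leq0$, closing the induction. The step I expect to be the main obstacle is precisely this converse: because minors of a totally non-positive matrix may vanish, the test vector $x^{A_r}$ need not lie in $\altr{r}$ (it can even be $0$), so condition $(3)$ cannot be applied to it directly, and the classical Fekete reduction (Theorem~\ref{fec}) is unavailable --- this is what forces both the all-submatrices formulation of $(2)$--$(4)$ and the genericity device above; everything else is routine cofactor bookkeeping together with the pigeonhole-and-limit argument for $(1)\Rightarrow(2)$.
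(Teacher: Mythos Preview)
Your proof is correct and follows the same overall architecture as the paper: the Gantmacher--Krein density plus a pigeonhole/limit argument for $(1)\Rightarrow(2)$, and an induction on $r$ using the cofactor identity $A_r\adj(A_r)=(\det A_r)I$ for the converse. Two differences are worth noting. First, the paper runs a single cycle $(1)\Rightarrow(2)\Rightarrow(3)\Rightarrow(4)\Rightarrow(1)$, declaring $(3)\Rightarrow(4)$ immediate; your choice to prove $(1)\Rightarrow(4)$ directly via the identity $x^{A_r}_i(A_rx^{A_r})_i=(\det A_r)\,\beta_i\sum_j\beta_j A^{ji}_r$ sidesteps any concern about whether $x^{A_r}$ lands in $\altr{r}$ and is arguably cleaner. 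Second, your genericity device in $(3)\Rightarrow(1)$ (avoiding the hyperplanes $\{\sum_j\beta_j A^{ji}_r=0\}$) works but is unnecessary: in the only nontrivial case $\det A_r\neq 0$, the matrix $\adj(A_r)$ is invertible, so for each $i$ some $A^{j_0 i}_r\neq 0$; together with the inductive hypothesis $A^{ji}_r\leq 0$ and $\beta_j>0$ this gives $\sum_j\beta_j A^{ji}_r<0$ for \emph{every} $i$ and \emph{every} $\alpha\in\altr{r}$, hence $x^{A_r}\in\altr{r}$ outright. This is exactly how the paper disposes of the issue in its $(4)\Rightarrow(1)$, and it would let you fold your separate $(3)\Rightarrow(1)$ and $(4)\Rightarrow(1)$ arguments into one.
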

\begin{proof}
	First suppose $A \in \mathbb{R}^{m \times n}$ is totally non-positive of order $k$. By
	Theorem~\ref{tando}, there exists a sequence $A^{(p)}$ of totally negative matrices of order $k$  such that $A^{(p)} \to A$ entrywise. Fix an $r \times r$ submatrix $A_r$ of $A$ for $r \in [2,k]$, and let $A^{(p)}_r$ be the submatrix of $A^{(p)}$ indexed by the same rows and columns of $A_r$. Now fix a vector $x \in \pn{r}$, and let
	$I \subseteq [1,r]$ index the nonzero coordinates of $x$. Since
	$A^{(p)}_r$ is totally negative, by Theorem~\ref{tn-sign-rev_k}(2) there exists $i_p
	\in I$ such that $x_{i_p} (A^{(p)}_r x)_{i_p} > 0$. Since $I$ is finite, there exists an
	increasing subsequence of positive integers $p_s$ such that $i_{p_s}=i_0$ for some $i_0\in I$ and for all $s \geq 1$. Hence
	\[
	x_{i_0} (A_r x)_{i_0} = \lim_{s \to \infty} x_{i_{p_s}} (A^{(p_s)}_r
	x)_{i_{p_s}} \geq 0, \qquad x_{i_0} \neq 0.
	\]
	
	\noindent This completes the proof of $(1) \implies (2)$.
	
	Next, that $(2) \implies (3)$ is immediate. Now
	assume~(4) holds. To prove $(1)$, we show by induction on $r \in [1,k]$ that the determinant of every $r \times r$ submatrix
	of $A$ is non-positive. The base case $r=1$ follows from the hypothesis. For the induction step, suppose $A_r$ is a square submatrix of $A$ of size $r \in [2,k]$ and all the proper minors of $A$ are non-positive. If $\det A_r = 0$ then we are done; else $\det(A_r) \neq 0$. Let $\alpha=(\alpha_1,\ldots,\alpha_r)^T\in \altr{r}$ and define the vector $x^{A_r}:=\adj (A_r)\alpha$. Since no row of $\adj (A_r)$ is zero, by \eqref{tnsnreq2}, $x^{A_r} \in \altr{r}$ and $x^{A_r}_i<0 ~(>0)$ if $\alpha_i >0 ~(<0)$ for all $i\in[1,r]$.  Now a similar calculation as in \eqref{tnsnreq4}, implies that for some $i_0 \in [1,r]$, we have
	\begin{center}
		$0\leq x^{A_r}_{i_0}(A_rx^{A_r})_{i_0} = (\det A_r)\alpha_{i_0}x^{A_r}_{i_0}.$
	\end{center} Since $x^{A_r}_{i_0}$ and $\alpha_{i_0}$ have opposite signs, it follows that $\det A_r<0$, which completes the proof of $(4) \implies (1)$ by induction.

Finally, to show $(3) \implies (4)$, let $A_r$ be an arbitrary invertible submatrix of size $r \in [2,k]$ and let $y^{A_r}=\adj (A_r)\alpha $, where $\alpha \in \altr{r}$ is  arbitrarily chosen. Now repeat the proof of the corresponding implication in Theorem \ref{tn-sign-rev_k}.
\end{proof}

Given Theorem~\ref{tnp-snr}, which is a totally non-positive of order $k$ analogue of
Theorem~\ref{tn-sign-rev_k}, a natural question is to seek a similar
totally non-positive (of order $k$) analogue of Theorem~\ref{tn_hull_k}. In \cite{AG16}, the authors show that under certain technical
constraints, a minimal test set of two matrices suffices to check the
total non-negativity of the entire interval hull. Our next result
removes these technical constraints from~\cite{AG16}, and holds for totally non-positive of order $k$
interval hulls for arbitrary $k \geq 1$ -- at the cost of working with a
larger (but nevertheless finite) test set:
\begin{theorem}\label{Tnp_int_k}
	Let $m,n \geq k \geq 1$ be integers, and $A,B \in \mathbb{R}^{m \times n}$. Then $\mathbb{I}(A,B)$ is totally non-positive of order $k$ if and only if the
	matrices $\{ I_{z,\tilde{z}}(A,B) : z \in \{ \pm 1 \}^m, \tilde{z} \in \{ \pm 1 \}^n
	\}$ are all totally non-positive of order $k$.
\end{theorem}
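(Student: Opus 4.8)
The plan is to follow the proof of Theorem~\ref{tn_hull_k} almost verbatim, but with the totally non-positive sign non-reversal test of Theorem~\ref{tnp-snr} playing the role of Theorem~\ref{tn-sign-rev_k}, and with Lemma~\ref{rohn_exten2} again serving as the bridge between an arbitrary matrix of the hull and the extreme matrices $I_{z,\tilde z}(A,B)$. The forward implication is immediate: by Lemma~\ref{int-lem1} every $I_{z,\tilde z}(A,B)$ lies in $\mathbb{I}(A,B)$, so if the whole hull is totally non-positive of order $k$ then so is each of these matrices.

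For the converse, assume all $I_{z,\tilde z}(A,B)$ are totally non-positive of order $k$ and fix $M\in\mathbb{I}(A,B)$; we must show $M$ is totally non-positive of order $k$. Taking $z=(1,\dots,1)^T$ and $\tilde z=(-1,\dots,-1)^T$ gives the test matrix $I_{z,\tilde z}(A,B)=\frac{A+B}{2}+\frac{|A-B|}{2}$, which is the entrywise maximum of $A$ and $B$; since it is $\leq 0$, every $M\in\mathbb{I}(A,B)$ satisfies $M\leq 0$. This disposes of the case $k=1$, and for $k\geq 2$ it also disposes of all $1\times 1$ minors. Now fix $r\in[2,k]$ and an arbitrary $r\times r$ submatrix $M'=M_{J\times K}$; by Theorem~\ref{tnp-snr}$((3)\Rightarrow(1))$ it suffices to show that $M'$ has the non-strict sign non-reversal property on $\altr{r}$. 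Take $x\in\altr{r}$ and let $z'\in\{\pm1\}^{r}$ record its signs, so $z'\in\altr{r}$. Writing $A'=A_{J\times K}$ and $B'=B_{J\times K}$, we have $M'\in\mathbb{I}(A',B')$, so Lemma~\ref{rohn_exten2} yields $x_i(M'x)_i\geq x_i(I_{z',z'}(A',B')x)_i$ for all $i\in[1,r]$.

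Next, extend $z'$ to $z\in\{\pm1\}^{m}$ (and $\tilde z\in\{\pm1\}^{n}$) by placing the entries of $z'$ in the positions indexed by $J$ (respectively $K$) and padding arbitrarily, say by $+1$, elsewhere. A short bookkeeping check shows that $I_{z',z'}(A',B')$ is precisely the $(J,K)$-submatrix of $I_{z,\tilde z}(A,B)$, since restricting $D_z$ to $J$ and $D_{\tilde z}$ to $K$ returns $D_{z'}$ in both slots (the entrywise absolute value commutes with taking submatrices). By hypothesis $I_{z,\tilde z}(A,B)$ is totally non-positive of order $k$, hence its $r\times r$ submatrix $I_{z',z'}(A',B')$ is totally non-positive, and therefore by Theorem~\ref{tnp-snr}$((1)\Rightarrow(3))$ it has the non-strict sign non-reversal property on $\altr{r}$: there is an index $i_0$ with $x_{i_0}\neq 0$ and $x_{i_0}(I_{z',z'}(A',B')x)_{i_0}\geq 0$. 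Combining this with the inequality from Lemma~\ref{rohn_exten2} gives $x_{i_0}(M'x)_{i_0}\geq 0$ with $x_{i_0}\neq 0$, which is exactly the non-strict sign non-reversal property for $M'$ at $x$. Since $M\leq 0$ and this holds for every square submatrix of every size $r\in[2,k]$, Theorem~\ref{tnp-snr} gives that $M$ is totally non-positive of order $k$.

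The one genuinely non-routine point — and the structural reason the test set here is the full family $\{I_{z,\tilde z}(A,B)\}$ rather than just the two matrices $C^\pm(A,B)$ — is that the sign non-reversal criterion for totally non-positive matrices (Theorem~\ref{tnp-snr}(3)) quantifies over \emph{all} square submatrices, not merely the contiguous ones, because there is no Fekete-type reduction to contiguous minors in the non-positive setting. Consequently the index sets $J,K$ above are arbitrary, and an alternating sign vector on $J$ need not extend to an alternating sign vector on $[1,m]$; we are therefore forced to allow every extension $z\in\{\pm1\}^{m}$, $\tilde z\in\{\pm1\}^{n}$, rather than just the two alternating ones. Apart from this observation, and the small submatrix-restriction identity for $I_{z,\tilde z}$, the argument is a direct transcription of the proof of Theorem~\ref{tn_hull_k}.
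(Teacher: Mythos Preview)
Your proof is correct and follows essentially the same approach as the paper: reduce to Theorem~\ref{tnp-snr} via Lemma~\ref{rohn_exten2}, working over \emph{arbitrary} (not merely contiguous) square submatrices, and observe that the relevant $I_{z',z'}(A',B')$ is a submatrix of some $I_{z,\tilde z}(A,B)$. Your explicit entrywise-maximum argument for $M\leq 0$ and your closing paragraph explaining why the full family $\{I_{z,\tilde z}(A,B)\}$ is needed (no Fekete-type reduction to contiguous minors in the non-positive case) are nice additions that the paper only gestures at.
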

\begin{rem}
	As in Remark~\ref{tnintrem}, note that this set of test vectors is independent of the choice $k$.
\end{rem}

\begin{proof}
	We assume $m,n\geq k\geq 2$, since the result is immediate for $k=1$. If $\mathbb{I}(A,B)$ is totally non-positive of order $k$, then so are $I_{z,\tilde{z}}(A,B)$ by Lemma \ref{int-lem1}.
	
	To show the converse, we repeat the proof of Theorem~\ref{tn_hull_k}, now working with
	arbitrary submatrices $M'$ of $M \in \mathbb{I}(A,B)$ of size $r \in [2,k]$ instead of contiguous submatrices. We once again have ~\eqref{Erohn}, but the matrix $I_{z',z'}(A',B')$ is a submatrix of $I_{z,\tilde{z}}(A,B)$ for some
	$z\in \{\pm 1\}^m,~\tilde{z}\in \{\pm 1\}^n$, which is totally non-positive of order $k$ by the hypothesis. The remainder of the
	proof is unchanged, except for the fact that we use Theorem~\ref{tnp-snr} in place of
	Theorem~\ref{tn-sign-rev_k} and the matrix $M$ is non-positive (since $I_{z,z'}(A,B)$ are non-positive).
\end{proof}

For the sake of completeness, we conclude by showing a sufficient condition for totally non-positive matrices using the Linear Complementarity Problem.
\begin{prop}\label{tnplcp1}
	Let $m,n \geq k \geq 2$ be integers and let $A \in \mathbb{R}^{m
	\times n}$ with $A\leq 0$. Then $A$ is totally non-positive of
	order $k$ if for every $r \times r$ submatrix $A_r$ of $A$ and
	for all $q \in \mathbb{R}^n$ with $q\geq 0$, if
	$z^1=(x_1,0,x_3,\ldots)^T$ and $z^2=(0,x_2,0,x_4,\ldots)^T$ (with
	all $x_i>0$) are two solutions of $\lcp{A_r}$, then $A_rz^1=A_rz^2$.
\end{prop}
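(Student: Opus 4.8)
The plan is to prove Proposition~\ref{tnplcp1} by the same density-plus-reduction strategy that runs through Section~\ref{sectnp}: reduce the non-positive statement to the totally negative case via Gantmacher--Krein's Theorem~\ref{tando}, and reduce the totally negative LCP statement to a sign non-reversal statement. Concretely, in view of Theorem~\ref{tnp-snr} it suffices to show that every $r \times r$ submatrix $A_r$ (for $r \in [2,k]$) has the non-strict sign non-reversal property on $\altr{r}$, and then invoke Theorem~\ref{tnp-snr}$(3)\Rightarrow(1)$. So the real content is: \emph{the LCP hypothesis on $A_r$ forces the non-strict sign non-reversal property on $\altr{r}$}.

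First I would argue the contrapositive at the level of a fixed submatrix $A_r$, imitating the $(4)\Rightarrow(1)$ step of the proof of Theorem~\ref{tn-lcp_1}. Suppose $A_r$ fails the non-strict sign non-reversal property on $\altr{r}$; then there is $x \in \altr{r}$ with $x_i(A_rx)_i < 0$ for \emph{every} $i \in [1,r]$ (strict, since $x$ has no zero coordinates and the property would be satisfied at any $i$ with $x_i(A_rx)_i \geq 0$). Set $v := A_rx$, and split $x = x^+ - x^-$, $v = v^+ - v^-$ into positive and negative parts as in that proof. Because $x \in \altr{r}$ its positive and negative parts have the alternating sign patterns $(+,0,+,0,\dots)^T$ and $(0,+,0,+,\dots)^T$ with all listed entries strictly positive; and from $x_iv_i<0$ one gets $(x^+)^Tv^+ = (x^-)^Tv^- = 0$ with $v^+, v^- \geq 0$. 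Define $q := v^+ - A_rx^+ = v^- - A_rx^-$ (the two expressions agree since $A_r(x^+ - x^-) = A_rx = v = v^+ - v^-$). Then $q \geq 0$ because $A_r \leq 0$ and $x^+, x^- \geq 0$, so $-A_rx^\pm \geq 0$ while $v^\pm \geq 0$; hence $z^1 := x^+$ and $z^2 := x^-$ are both solutions of \lcpn{A_r,q}. They have exactly the sign patterns in the hypothesis (with all nonzero entries strictly positive), so the hypothesis forces $A_rz^1 = A_rz^2$, i.e. $A_rx^+ = A_rx^-$, whence $v = A_rx = A_r(x^+ - x^-) = 0$. But $v = 0$ contradicts $x_iv_i < 0$ for all $i$. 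Therefore every such $A_r$ has the non-strict sign non-reversal property on $\altr{r}$.

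Having established the sign non-reversal property for all $r \times r$ submatrices with $r \in [2,k]$, I would conclude by Theorem~\ref{tnp-snr} (implication $(3)\Rightarrow(1)$, which is stated there for submatrices on $\altr{r}$) that $A$ is totally non-positive of order $k$. Note the density theorem is not actually needed for this direction once Theorem~\ref{tnp-snr} is in hand — the contrapositive argument above produces the LCP solutions directly from a sign-reversing vector without passing to totally negative approximants; the density input is already absorbed into Theorem~\ref{tnp-snr}.

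The main obstacle, I expect, is purely bookkeeping rather than conceptual: one must check that the positive/negative parts $x^\pm$ of an alternating vector really do land in the two prescribed sign-pattern classes with all \emph{indicated} coordinates strictly positive (so that the hypothesis, which is stated only for such strictly-positive-entry solutions $z^1, z^2$, genuinely applies), and that the two defining formulas for $q$ coincide and yield $q \geq 0$. A minor subtlety is that the hypothesis is quantified over $q \in \mathbb{R}^n$ with $q \geq 0$ while $A_r$ acts on $\mathbb{R}^r$; this is just the same harmless index/ambient-dimension looseness already present in the statement of Theorem~\ref{tn-lcp_1}, and causes no difficulty since we only ever use the LCP on the $r \times r$ block. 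Everything else is a direct transcription of the argument in the proof of Theorem~\ref{tn-lcp_1}$(4)\Rightarrow(1)$.
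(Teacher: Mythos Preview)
Your proposal is correct and follows essentially the same approach as the paper: reduce to the non-strict sign non-reversal characterization in Theorem~\ref{tnp-snr}, assume for contradiction that some $x\in\altr{r}$ satisfies $x_i(A_rx)_i<0$ for all $i$, split $x$ and $v=A_rx$ into positive/negative parts, build $q\geq 0$ exactly as in the proof of Theorem~\ref{tn-lcp_1}, and obtain the contradiction from the LCP hypothesis. The only cosmetic difference is that the paper phrases the contradiction as $v^+\neq v^-$ (since $A_rx\in\altr{r}$) forcing $A_rx^+\neq A_rx^-$, whereas you apply the hypothesis first to get $A_rx=0$ and then contradict $x_iv_i<0$; these are the same argument.
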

\begin{proof}
	
	 By Theorem \ref{tnp-snr}, it is enough to show that every $r
	 \times r$ submatrix $A_r$ of $A$ has the non-strict sign
	 non-reversal property with respect to $\altr{r}$, for $r \in
	 [2,k]$. Fix $r \in [2,k]$. Suppose that there exist an $r\times
	 r$ submatrix $A_r$ of $A$ and a vector $x \in \altr{r}$ such
	 that $x_i(A_rx)_i<0$ for all $i \in [1,r]$. Defining
	 $x^+,x^-,v^+,v^-$ and $q$ as in the proof of Theorem
	 \ref{tn-lcp_1}, we conclude that $q\geq 0$ and $x^+,x^-$ are two
	 solutions of $\lcp{A_r}$. Also, $x^+,~x^-$ are with sign patterns $\begin{pmatrix}
	+ \\ 0 \\+\\0\\\vdots
	\end{pmatrix}$ and $\begin{pmatrix}
	0 \\ + \\0\\+\\\vdots
	\end{pmatrix}$ respectively (or vice-versa). By \eqref{thrmaeq1}, we have
	\begin{center}
		$A_rx^+ + q=v^+\neq v^-= A_rx^-+q,$
	\end{center}
	since $Ax \in \altr{r}$. Thus $A_rx^+\neq A_rx^-$, a contradiction. Hence $A$ is totally non-positive of order $k$. \end{proof}

The next result provides an improvement, via Linear Complementarity at a single vector $q$ (for each square submatrix of $A$).
\begin{prop}\label{tnplcp2}
	Let $m,n \geq k\geq 1$ be integers and $A \in \mathbb{R}^{m \times n}$. Then $A$ is totally non-positive of order $k$ if for every square submatrix $A_r$ of $A$ of size $r \in [1,k]$ and $q^{A_r}$ as defined in \eqref{tlcp}, the following holds:\begin{enumerate}
		\item[(i)] $\bf{0}$ is the solution of
		$\lcpn{A_r,q^{A_r}}$.
		\item If $z^1$ and $z^2$ are two nonzero solutions of
		$\lcpn{A_r,q^{A_r}}$ then $A_rz^1=A_rz^2$.
	\end{enumerate}
\end{prop}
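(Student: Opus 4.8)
The plan is to imitate the proof of Theorem~\ref{tn-lcp_2}$(2)\Rightarrow(1)$, weakening the strict inequalities there to non-strict ones exactly as Theorem~\ref{tnp-snr} does relative to Theorem~\ref{tn-sign-rev_k}. First, I would read off $A\leq 0$ from hypothesis~(i): for a $1\times 1$ submatrix $A_1=(a_{ij})$ the convention gives $x^{A_1}=(-1)$, so $q^{A_1}=-a_{ij}$, and the fact that $\mathbf{0}$ solves \lcpn{A_1,q^{A_1}} forces $-a_{ij}\geq 0$. Then I would prove, by induction on $r\in[1,k]$, that \emph{every} $r\times r$ submatrix $A_r$ of $A$ (contiguous or not) has $\det A_r\leq 0$; as in the proof of Theorem~\ref{tnp-snr}, this is carried out for all submatrices directly, so Theorem~\ref{fec} is not invoked. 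The base case $r=1$ is the previous sentence.

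For the induction step, fix an $r\times r$ submatrix $A_r$ with $r\in[2,k]$ and assume that all proper minors of $A_r$ are non-positive. If $\det A_r=0$ there is nothing to do, so assume $\det A_r\neq 0$; I must show $\det A_r<0$, so suppose for contradiction that $\det A_r>0$. Set $x^{A_r}:=(A^{11}_r,0,A^{13}_r,0,\ldots)^T$ and its companion $z^{A_r}:=(0,A^{12}_r,0,A^{14}_r,\ldots)^T$: both are $\leq 0$ by the induction hypothesis, and since $x^{A_r}-z^{A_r}$ is the first column of $\adj(A_r)$ we obtain $A_rx^{A_r}-A_rz^{A_r}=A_r\adj(A_r)e^1=(\det A_r)e^1$. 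Moreover $q^{A_r}=A_rx^{A_r}\geq 0$, because $A_r\leq 0$ and $x^{A_r}\leq 0$. A direct check then shows that $\mathbf{0}$, $-x^{A_r}$ and $-z^{A_r}$ — all of which are $\geq 0$ — solve \lcpn{A_r,q^{A_r}}: the slack $A_r(-x^{A_r})+q^{A_r}$ vanishes, while $A_r(-z^{A_r})+q^{A_r}=(\det A_r)e^1\geq 0$ is complementary to $-z^{A_r}$ because the first entry of $z^{A_r}$ is $0$.

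If both $-x^{A_r}$ and $-z^{A_r}$ are nonzero, then hypothesis~(ii) yields $A_rx^{A_r}=A_rz^{A_r}$, hence $(\det A_r)e^1=0$ and $\det A_r=0$, a contradiction. They cannot both vanish, since $x^{A_r}-z^{A_r}$ is the first column of the invertible matrix $\adj(A_r)$. If $x^{A_r}=0$, then $q^{A_r}=0$ and $-z^{A_r}\geq 0$ is a nonzero vector with $A_r(-z^{A_r})=(\det A_r)e^1$, impossible because $A_r\leq 0$ forces $A_r(-z^{A_r})\leq 0$ while $\det A_r>0$. The remaining case, $z^{A_r}=0$, is the one I expect to be the crux: here $q^{A_r}=(\det A_r)e^1$, and one has to examine \lcpn{A_r,(\det A_r)e^1} more closely — every solution being a nonnegative multiple of $-x^{A_r}$ — and from this structure produce a second nonzero solution in order to apply~(ii) (which would again force $A_rx^{A_r}=0$ and a contradiction). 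Pinning down that second solution, and combining it correctly with (i), (ii) and the sign constraint $A_r\leq 0$, is the delicate point; the remainder is routine bookkeeping mirroring Theorems~\ref{tn-lcp_2} and~\ref{tnp-snr}. Once $\det A_r\leq 0$ is established for all submatrices of all sizes $\leq k$, we conclude that $A$ is totally non-positive of order $k$.
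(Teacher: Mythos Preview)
Your overall strategy mirrors the paper's: both argue by induction on $r$, read off $A\leq 0$ from hypothesis~(i) at $r=1$, and for $r\geq 2$ with $\det A_r\neq 0$ exhibit $-x^{A_r}$ and $-z^{A_r}$ as solutions of \lcpn{A_r,q^{A_r}} and then invoke~(ii) to force $\det A_r<0$. The paper's proof simply asserts that $-x^{A_r}$ and $-z^{A_r}$ are ``two distinct nonzero solutions''; you are more scrupulous and isolate the degenerate subcases where one of them vanishes. Your treatment of $x^{A_r}=0$ is correct.

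The case $z^{A_r}=0$, however --- which you rightly flag as ``the crux'' --- is a genuine obstruction that cannot be resolved along the lines you sketch. Take $A=\begin{pmatrix}-1&-1\\0&-1\end{pmatrix}$ with $k=2$. All entries are $\leq 0$, so the $r=1$ conditions hold. For $r=2$ one has $A^{11}=-1$, $A^{12}=0$, hence $x^{A}=(-1,0)^T$, $z^{A}=(0,0)^T$, and $q^{A}=(1,0)^T$; a direct check shows that the \emph{only} solutions of \lcpn{A,q^{A}} are $\mathbf{0}$ and $(1,0)^T$. Thus hypothesis~(ii) is vacuous --- there is no ``second nonzero solution'' to produce --- yet $\det A=1>0$. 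Both~(i) and~(ii) hold for every submatrix, but $A$ is not totally non-positive of order~$2$. So the gap you identified is not merely delicate bookkeeping: it reflects an actual defect in the statement (and hence also in the paper's terse proof), and no argument of the proposed type can close it.
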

\begin{proof}
	We prove by induction on $r \in [1,k]$ that $\det {A_r}\leq 0$
	for all $r \times r$ submatrices $A_r$ of $A$. Let $r=1$ and
	$A_1=(a_{ij})$ for some $i\in [1,m],~j\in [1,n]$. Then
	$x^{A_1}=(-1)$ by convention, and $q^{A_1}=-(a_{ij})$. If
	$a_{ij}> 0$ then $\bf{0}$ is not a solution of
	$\lcpn{A_1,q^{A_1}}$. Thus $A\leq 0$.
	
	Let $r \in [2,k]$ and suppose that all the $(r-1)\times (r-1)$
	and smaller minors of $A$ are non-positive, and let $A_r$ be a
	submatrix of $A$ of size $r$. If $\det {A_r}=0$, then we are
	done, else assume $A_r$ is non-singular. If $\det {A_r}>0$,
	repeating the proof of Theorem \ref{tn-lcp_2}, once again we have
	$-x^{A_r}$ and $-z^{A_r}$ (as defined in \eqref{lcpsingleeq1})
	are two distinct nonzero solutions of $\lcpn{A_r,q^{A_r}}$, but $Ax^{A_r}\neq Az^{A_r}$. Thus $\det {A_r}<0$ and hence $A$ is totally non-positive of order $k$.
\end{proof}

\begin{example}
	The converse of the previous theorem does not hold. We explain this with an example. Consider the totally non-positive matrix
	$A=\begin{pmatrix}
	0 & 0 & 0\\-1 & -3 &-3\\ -1 & -1 & -1
	\end{pmatrix}$. Then $q^A=\begin{pmatrix}
	0\\2\\2
	\end{pmatrix}$, and $z^1=\begin{pmatrix}
	2\\0\\0
	\end{pmatrix}$, $z^2=\begin{pmatrix}
	t\\0\\0
	\end{pmatrix}$, where $t>2$, are two solutions of $\lcp{A}$, but $Az^1\neq Az^2$.
\end{example}

\section{Other orthants cannot yield test vectors}

In the preceding sections, we have characterized a negative/non-positive
matrix to be totally negative/totally non-positive of order $k$, by its
square submatrices $A_{r}$ with $r \in [2,k]$ satisfying the sign
non-reversal or variation diminishing property on all of the open
bi-orthant $\altr{r}$ -- or equivalently, on single test vectors which
are drawn from this bi-orthant. Our next result explains that these
results are `best possible' in the following sense: if $x \in
\pn{r}\setminus \altr{r}$ with all $x_i\neq 0$ (i.e., there are two
successive $x_i$ of the same sign), then \textit{every} $r \times r$
totally negative matrix of order $r-1$ satisfies the sign non-reversal
property and the variation diminishing property for $x$. Thus the above
characterizations cannot hold with test vectors in any open bi-orthant other
than in $\altr{r}$,\footnote{Note that if the vector $x \neq 0$ is drawn
from an orthant that is disjoint from $\mathbb{R}^r_{+,-}$, then the
coordinates of $x$ are all non-negative or all non-positive. In both of
these cases, since $A<0$, every nonzero coordinate of $Ax$ has the
opposite sign to that of $x$. Hence $A$ does not satisfy the sign
non-reversal property with respect to such vectors $x \not\in
\mathbb{R}^r_{+,-}$.} since one shows that there exist totally negative
matrices of order $r-1$ with positive determinants (see
e.g.~\cite[Chapter V, Theorem~17]{GK50}). 

\begin{theorem}\label{thrmlast}
	 Let $A_r \in \mathbb{R}^{r \times r}$ be a totally negative matrix of order $r-1$ and let $x \in \pn{r}\setminus \altr{r}$ with all $x_i$ nonzero. Then:
	\begin{enumerate}
		\item There exists a coordinate $i \in [1,r]$ such that $x_i(A_rx)_i>0$. In other words, $A$ satisfies the sign non-reversal property.
		
		\item  $S^+(A_rx) \leq S^-(x)$. Moreover, if equality occurs, then the first (last) component of $Ax$ (if zero, the unique sign required to determine $S^+(Ax)$) has the same sign as the first (last) component of $x$. In other words, $A$ satisfies the variation diminishing property.
	\end{enumerate}
\end{theorem}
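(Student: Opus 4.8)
The plan is to exploit the hypothesis $x \in \pn{r} \setminus \altr{r}$: since all coordinates of $x$ are nonzero but $x$ does not have strictly alternating signs, there is some index $j \in [1, r-1]$ with $x_j x_{j+1} > 0$. The idea is to "collapse" the two consecutive like-signed coordinates into a single one, reducing the problem to a matrix of order $r-1$ where we may invoke the already-proved Theorems \ref{tn-sign-rev_k} and \ref{TN-vandimnew} — more precisely, the versions of those results for totally negative matrices, applied to an $(r-1) \times (r-1)$ or smaller submatrix obtained from $A_r$. First I would fix such a $j$, and, mimicking the column-grouping trick used in the proof of $(1)\implies(2)$ in Theorem \ref{TN-vandimnew}, form the matrix $B := [a^1, \dots, a^{j-1}, \ |x_j| a^j + |x_{j+1}| a^{j+1}, \ a^{j+2}, \dots, a^r] \in \mathbb{R}^{r \times (r-1)}$, where $a^1, \dots, a^r$ are the columns of $A_r$. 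A short verification shows $B$ is totally negative of order $r-1$ (each minor of $B$ is a positive combination of minors of $A_r$ of the same size, and $A_r$ is $TN_{r-1}$ negative, i.e. totally negative of order $r-1$), and that $B y = A_r x$ where $y \in \mathbb{R}^{r-1}$ is obtained from $x$ by replacing the two entries $x_j, x_{j+1}$ with their common-sign representative (the entry of $y$ in the merged slot equals $\operatorname{sgn}(x_j)$, up to scaling).

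Next I would run the two parts separately. For part (1): the vector $y$ has all coordinates nonzero, and since we merged two like-signed entries its sign changes are exactly one fewer than those of $x$; in particular $y \in \pn{r-1}$ whenever $x$ had at least two sign changes, and if $x$ had exactly one sign change then $y \in \altr{r-1}$ after the collapse is trivial — but in all cases $y$ has at least one positive and one negative coordinate (because $x \in \pn{r}$ and we only merged within a sign block). Wait — I need $y$ to land in the test set for which $B$ has the sign non-reversal property. By Theorem \ref{tn-sign-rev_k}(2) applied to the $(r-1)\times(r-1)$ submatrix $B' := B$ (a square totally negative matrix of order $r-1$, hence $B$ restricted to any $r-1$ rows is totally negative; but $B$ is $r \times (r-1)$, so I instead argue coordinate-wise), there is an index $i_0$ with $y_{i_0}(B y)_{i_0} > 0$; translating back through the merge, if $i_0$ is the merged slot then $\operatorname{sgn}(y_{i_0}) = \operatorname{sgn}(x_j) = \operatorname{sgn}(x_{j+1})$ and $(By)_{i_0} = (A_r x)_{i_0}$, giving $x_j (A_r x)_{i_0} > 0$ as required — and if $i_0$ is not the merged slot it transfers directly. (The subtlety that $B$ is not square is handled by noting $B$ has full column rank $r-1$ and its $(r-1)\times(r-1)$ submatrices are totally negative, so the $N$-matrix characterization Theorem \ref{snrn} applies to any square reduction containing the support of $By$.)

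For part (2): here I would invoke Theorem \ref{TN-vandimnew}(2) for the totally negative matrix $B$ in the form $S^+(By) \le S^-(y)$ together with the boundary-sign statement. Since $S^-(y) = S^-(x) - 1 < S^-(x)$ when $x_j x_{j+1}>0$ — no wait, merging two like-signed consecutive entries does not change the number of sign changes: $S^-(y) = S^-(x)$. Then $S^+(A_r x) = S^+(By) \le S^-(y) = S^-(x)$, and the equality case plus the first/last-component sign assertion transfer verbatim because the first and last nonzero entries of $y$ are the first and last entries of $x$ (we never touched the extreme sign blocks unless $j=1$ or $j=r-1$, in which case the merged slot inherits the extreme sign anyway). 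The main obstacle I anticipate is the bookkeeping around the non-square matrix $B$: Theorems \ref{tn-sign-rev_k} and \ref{TN-vandimnew} are stated for $m \times n$ matrices with $m,n \ge 2$, so strictly speaking they apply to $B \in \mathbb{R}^{r \times (r-1)}$ directly, and one should double-check that "totally negative of order $r-1$" for $B$ is exactly the hypothesis $A<0$ plus all minors negative that those theorems require — which it is, since $\min\{r, r-1\} = r-1$ makes $B$ fully totally negative. A secondary subtlety is confirming that collapsing consecutive like-signed coordinates genuinely preserves both $S^-$ and the location of the first/last nonzero entry; this is elementary but must be stated carefully to make the equality-case conclusion airtight.
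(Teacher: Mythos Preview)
Your column-merging idea is in the same spirit as the paper's, and for part~(2) it goes through: since $B\in\mathbb{R}^{r\times(r-1)}$ is totally negative (each minor of $B$ is a positive combination of same-size minors of $A_r$, all of order at most $r-1$), Theorem~\ref{TN-vandimnew} applies directly to the rectangular matrix $B$ and the vector $y\in\pn{r-1}$, giving $S^+(A_rx)=S^+(By)\le S^-(y)=S^-(x)$, with the boundary-sign clause transferring because the first and last entries of $y$ share the signs of those of $x$. The paper likewise refers part~(2) back to the argument for Theorem~\ref{TN-vandimnew}, so here you are essentially aligned.

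Part~(1), however, has a genuine gap in the row/column bookkeeping. After passing to an $(r-1)\times(r-1)$ submatrix $B'$ of $B$ and invoking Theorem~\ref{snrn}, you obtain $i_0\in[1,r-1]$ with $y_{i_0}(B'y)_{i_0}>0$. But $i_0$ indexes a \emph{column} of $B$ on the $y$ side and a \emph{row} of $B'$ on the $(B'y)$ side, and after a single merge these two indexings do not match: if for instance you delete the last row of $B$ and the resulting $i_0$ satisfies $i_0>j$, then $y_{i_0}=x_{i_0+1}$ while $(B'y)_{i_0}=(A_rx)_{i_0}$, so you only get $x_{i_0+1}(A_rx)_{i_0}>0$ --- not of the required diagonal form $x_i(A_rx)_i>0$. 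Thus your claim that ``any square reduction'' works is incorrect; the salvage is to delete precisely row $j$ or row $j+1$, after which the column shift and the row shift coincide and every case goes through. The paper sidesteps this altogether by collapsing \emph{all} sign blocks of $x$ at once (not just one pair), obtaining $B\in\mathbb{R}^{r\times(p+1)}$ with $p=S^-(x)\le r-2$ and $A_rx=B\,\bd{p+1}$; for~(1) it then argues by contradiction, selecting one row index $i_t$ from each sign block so that $\sgn(x_{i_t})=(-1)^{t-1}=(\bd{p+1})_t$ by construction, whence the hypothesis $x_i(A_rx)_i\le 0$ for all $i$ forces the totally negative square matrix $B_{I\times[1,p+1]}$ to reverse the sign of $\bd{p+1}\in\altr{p+1}$, contradicting Theorem~\ref{tn-sign-rev_k}. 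The full collapse makes the row--column alignment automatic.
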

\begin{proof}
Suppose	$x \in \pn{r}\setminus \altr{r}$ with all $x_i\neq 0$. One can partition $x$ as in \eqref{vdpart} and the subsequent lines, with $k$ replaced by some $p \leq r-2$. By the argument following \eqref{vdpart}, the matrix $B \in \mathbb{R}^{r \times {(p+1)}}$ is totally negative and $Ax=B\bd{p+1}$. We define $y:=Ax=B\bd{p+1}$.

 To show $(1)$, suppose for contradiction that $x_iy_i\leq 0$ for all $i \in [1,r]$. Now consider the index set \begin{center} $I=\{i_1,\ldots, i_{p+1}\}, \hbox{ where } i_t \in [s_{t-1}+1,s_t]$.
\end{center}
Then the square submatrix $B_{I\times [p+1]}$ is totally negative and it reverses the signs of $\bd{p+1}$, since $B_{I\times [k+1]}\bd{k+1}=y_I$.  This gives the desired contradiction by Theorem \ref{tn-sign-rev_k}. 

The proof of $(2)$ is similar to that of case $(2)$ of Theorem \ref{TN-vandimnew}.
\end{proof}

An analogue of Theorem \ref{thrmlast} also holds for the class of totally non-positive matrices:
\begin{prop}\label{tnpotherorthant}
	 If $A\in \mathbb{R}^{r \times r}$ is a totally non-positive matrix of order $r-1$ and the vector $x$ is defined as in Theorem \ref{thrmlast}, then:
	\begin{enumerate}
		\item There exists a coordinate $i \in [1,r]$ such that  $x_i(A_rx)_i\geq0$. 
		\item $S^-(A_rx) \leq S^-(x)$. Moreover, if equality occurs and $Ax \neq 0$, then the first (last)
		nonzero component of $Ax$ has the same sign as the first (last) component of $x$.  
		
	\end{enumerate}
\end{prop}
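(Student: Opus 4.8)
The plan is to deduce this proposition from its totally negative counterpart, Theorem~\ref{thrmlast}, by the same density argument used to pass from Theorems~\ref{tn-sign-rev_k} and~\ref{TN-vandimnew} to Theorems~\ref{tnp-snr} and~\ref{TNP-vandimnew}. Write $A_r$ for the matrix in question. Since the existence of a vector $x \in \pn{r}\setminus\altr{r}$ with all coordinates nonzero forces $r \geq 2$, Gantmacher--Krein's density Theorem~\ref{tando} (applied with $k = r-1$) supplies a sequence $(A^{(p)})$ of $r \times r$ totally negative matrices of order $r-1$ with $A^{(p)} \to A_r$ entrywise. The vector $x$ is held fixed throughout, has no zero coordinate, and lies in $\pn{r}\setminus\altr{r}$ — hypotheses independent of $p$ — so Theorem~\ref{thrmlast} applies to each pair $(A^{(p)},x)$.

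For part (1): for each $p$, Theorem~\ref{thrmlast}(1) gives an index $i_p \in [1,r]$ with $x_{i_p}(A^{(p)}x)_{i_p} > 0$. As $[1,r]$ is finite, pass to a subsequence along which $i_p \equiv i_0$. Then $x_{i_0}(A_r x)_{i_0} = \lim_p x_{i_0}(A^{(p)}x)_{i_0} \geq 0$, and $x_{i_0} \neq 0$ because every coordinate of $x$ is nonzero. (One cannot hope for strict inequality in the limit, which is why the conclusion is only $\geq 0$.)

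For part (2): apply Theorem~\ref{thrmlast}(2) to each $A^{(p)}$ to get $S^+(A^{(p)}x) \leq S^-(x)$, and combine with the elementary bound $S^-(v) \leq S^+(v)$ and Lemma~\ref{limsc} to obtain
\[
S^-(A_r x) \leq \liminf_{p\to\infty} S^-(A^{(p)}x) \leq \liminf_{p\to\infty} S^+(A^{(p)}x) \leq S^-(x).
\]
For the equality statement, assume $S^-(A_r x) = S^-(x) = k$ and $A_r x \neq 0$. The chain above together with $\liminf_p S^-(A^{(p)}x) \geq S^-(A_r x) = k$ forces $S^-(A^{(p)}x) = S^+(A^{(p)}x) = k$ for all large $p$; hence the count of sign changes of $A^{(p)}x$ is insensitive to its zero coordinates, and since $A^{(p)}x \to A_r x$ the nonzero sign pattern of $A^{(p)}x$ eventually matches that of $A_r x$ (including its first and last nonzero entries). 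Invoking Theorem~\ref{thrmlast}(2) for $A^{(p)}$ in the equality regime $S^+(A^{(p)}x) = S^-(x)$ shows the first (last) nonzero component of $A^{(p)}x$ has the same sign as the first (last) component of $x$; letting $p \to \infty$ and using that $x$ has no zero coordinate gives the claim.

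The main obstacle, exactly as in the proof of Theorem~\ref{TNP-vandimnew}, is the bookkeeping in the equality case: $A_r x$ may have zero entries while the approximants $A^{(p)}x$ need not, so one must verify that no spurious sign change appears near a zero of $A_r x$ and that the ``first/last nonzero component'' is genuinely stable along the sequence. The identity $S^-(A^{(p)}x) = S^+(A^{(p)}x)$ for large $p$ is precisely what rules this out, so once it is in hand the remainder is routine.
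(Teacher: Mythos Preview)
Your proof is correct and follows essentially the same approach as the paper: the paper's proof simply says to repeat the arguments of Theorems~\ref{tnp-snr} and~\ref{TNP-vandimnew}, replacing the invocations of Theorems~\ref{tn-sign-rev_k} and~\ref{TN-vandimnew} by Theorem~\ref{thrmlast}, which is exactly the density-plus-limit argument you have written out in detail.
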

\begin{proof}
	Repeat the proofs of  Theorems \ref{tnp-snr} and \ref{TNP-vandimnew}, except for the use of Theorem \ref{thrmlast} in place of Theorems \ref{tn-sign-rev_k} and \ref{TN-vandimnew} respectively.
\end{proof}

In 1950, Gantmacher--Krein \cite[Chapter V, Theorem~17]{GK50} showed that
there exist $n \times n$ totally negative matrices of order $n-1$ with
positive determinants. Thus by Theorem \ref{thrmlast} and Proposition
\ref{tnpotherorthant}, we have the following theorem.

\begin{theorem}
	Totally negative/non-positive matrices can not be characterized by the variation diminishing property or the sign non-reversal property using test vectors drawn from any open orthant outside of the bi-orthant $\altr{r}$.
\end{theorem}

We conclude with a similar observation about the LCP: Theorem
\ref{tn-lcp_1} shows that for a totally negative matrix $A$ of order $k$,
the solution sets $\sol{A_r}$ with $r\in [2,k]$ do not contain two
vectors with alternately positive and zero coordinates (and disjoint
supports) simultaneously, for all $q\in \mathbb{R}^r$ with $q>0$. Our
final result shows that if $A \in \mathbb{R}^{r \times r}$ is totally
negative of order $r-1$, the same holds when `alternating' is replaced by
`not always alternating'. Hence by the existence result alluded to in the
first paragraph of this section, totally negative matrices can not be
detected by the solution sets $\sol{A}$ of LCP, which do not contain two vectors with disjoint supports, at least one of which has two consecutive positive coordinates. Thus, 
the LCP-characterization of total negativity also distinguishes
the above `alternation'.

\begin{prop}
	If $A \in \mathbb{R}^{r \times r}$ with $r\geq 2$ is a totally
	negative matrix of order $r-1$, then $\sol{A_r}$ with $q>0$ does not simultaneously contain two vectors which have disjoint supports, and at least one of which has two consecutive positive coordinates.
\end{prop}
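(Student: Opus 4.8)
The plan is to argue by contradiction: from the two putative LCP solutions I would manufacture a single vector on which a suitable principal submatrix of $A_r$ reverses \emph{every} sign, and then contradict Theorem~\ref{thrmlast}(1). So suppose toward a contradiction that $\mathrm{SOL}(A_r,q)$ contains two nonzero vectors $z^1,z^2$ with disjoint supports $S_1:=\mathrm{supp}(z^1)$ and $S_2:=\mathrm{supp}(z^2)$, where $z^1$ has two consecutive positive coordinates, say at positions $i_0,i_0+1$. Put $x:=z^1-z^2$ and $T:=S_1\cup S_2$. Because the supports are disjoint, $x_j=z^1_j>0$ for $j\in S_1$, $x_j=-z^2_j<0$ for $j\in S_2$, and $x_j=0$ for $j\notin T$; in particular $x_{i_0},x_{i_0+1}>0$ occupy adjacent positions of $T$, so the restriction $x_T$ has all coordinates nonzero, lies in $\pn{|T|}$, and is \emph{not} in $\altr{|T|}$, while $|T|=|S_1|+|S_2|\geq 2+1=3$.

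The second step is the sign computation. Writing $y^\ell:=A_rz^\ell+q$, complementarity $(z^\ell)^Ty^\ell=0$ together with $z^\ell,y^\ell\geq 0$ forces $y^\ell_j=0$ for every $j\in S_\ell$; and since $A_rx=(y^1-q)-(y^2-q)=y^1-y^2$, for $j\in S_1$ one gets $(A_rx)_j=-y^2_j\leq 0$ with $x_j>0$, for $j\in S_2$ one gets $(A_rx)_j=y^1_j\geq 0$ with $x_j<0$, and for $j\notin T$ one has $x_j=0$. Hence $x_j(A_rx)_j\leq 0$ for all $j\in[1,r]$.

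Finally I would pass to the principal submatrix $A'':=(A_r)_{T\times T}$. Since $x$ is supported on $T$, $(A_rx)_T=A''x_T$, so the inequality above says that $A''$ reverses every sign of $x_T$. If $|T|<r$ then every minor of $A''$ is a minor of $A_r$ of order $\leq |T|\leq r-1$, hence negative, so $A''$ is totally negative and in particular totally negative of order $|T|-1$; and if $|T|=r$ then $A''=A_r$ is totally negative of order $r-1$ by hypothesis. In either case $A''$ and $x_T\in\pn{|T|}\setminus\altr{|T|}$ (all coordinates nonzero, $|T|\geq 3$) satisfy the hypotheses of Theorem~\ref{thrmlast}, so Theorem~\ref{thrmlast}(1) produces an index $i\in T$ with $(x_T)_i(A''x_T)_i=x_i(A_rx)_i>0$, contradicting the second step. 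This contradiction completes the proof.

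The only real obstacle is the bookkeeping forced by $x$ having zero coordinates when $S_1\cup S_2\neq[1,r]$: one cannot feed $A_r$ and $x$ into Theorem~\ref{thrmlast} directly, and must first restrict to $A''=(A_r)_{T\times T}$, check that total negativity of order $|T|-1$ is inherited (using that $A_r$ has \emph{all}, not just contiguous, minors of order $\le r-1$ negative), and check that $x_T$ retains two consecutive positive coordinates so that it stays outside the alternating orthant. Everything else is the same ``reversal of signs'' manipulation as in the proof of Theorem~\ref{tn-lcp_1}, run in reverse.
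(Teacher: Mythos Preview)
Your argument is correct and follows the same route the paper indicates: assume two such solutions exist, form $x=z^1-z^2$, show via LCP complementarity that $x_j(A_rx)_j\le 0$ for all $j$, and contradict Theorem~\ref{thrmlast}(1). The paper's proof is the one-line remark that the argument is ``analogous to Theorem~\ref{tn-lcp_1} using Theorem~\ref{thrmlast}(1)''; you have carried out that analogy explicitly and, in addition, handled the bookkeeping issue that the paper's sketch glosses over---namely that $S_1\cup S_2$ need not equal $[1,r]$, so $x$ may have zero coordinates and Theorem~\ref{thrmlast} cannot be applied to $(A_r,x)$ directly. Your restriction to the principal submatrix $A''=(A_r)_{T\times T}$, together with the observations that (i) $A''$ inherits total negativity of order $|T|-1$ from $A_r$, and (ii) the consecutive indices $i_0,i_0+1$ remain adjacent in $T$ so that $x_T\notin\altr{|T|}$, is exactly the right fix.
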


The proof is analogous to Theorem \ref{tn-lcp_1} using Theorem \ref{thrmlast} $(1)$.

\section*{Acknowledgments}
I thank J\"{u}rgen Garloff for pointing me to the work of
Gantmacher--Krein for existence and density results involving totally
negative matrices. I also thank Apoorva Khare for a detailed reading of
an earlier draft and for providing valuable feedback. Finally, I thank
the anonymous referee for providing useful comments and references that
improved the manuscript. This work is supported by National Post-Doctoral
Fellowship PDF/2019/000275 (SERB, Govt.\ of India), C.V.\ Raman
Postdoctoral Fellowship 80008664 (IISc), INSPIRE Faculty Fellowship research grant DST/INSPIRE/04/2021/002620
(DST, Govt.~of India),
and IIT Gandhinagar Internal Project: IP/IITGN/MATH/PNC/2223/25.

\end{document}